\newtheorem{lemma}{Lemma}[section]
\newtheorem{prop}[lemma]{Proposition}
\newtheorem{thm}[lemma]{Theorem}
\newtheorem{cor}[lemma]{Corollary}
\newtheorem*{mainthm}{Theorem A}
\newtheorem*{cliff}{Clifford's Theorem}
\newtheorem*{maincor}{Corollary B}
\theoremstyle{definition}
\theoremstyle{remark}
\newtheorem{rmk}[lemma]{Remark}
\newcommand{\Pro}{\mathbb P}
\newcommand{\F}{\mathbb F}
\newcommand{\Z}{\mathbb Z}
\begin{document}

\title{On the algebra structure of some bismash products}

\author[Matthew C. Clarke]{MATTHEW C. CLARKE\\
Trinity College,\\
Cambridge, CB\textup{2 1}TQ\\
email: \texttt{m.clarke@dpmms.cam.ac.uk}}

\begin{abstract}

We study several families of semisimple Hopf algebras, arising as bismash products, which are constructed from finite groups with a certain specified factorization. First we associate a bismash product $H_q$ of dimension $q(q-1)(q+1)$ to each of the finite groups $PGL_2(q)$ and show that these $H_q$ do not have the structure (as algebras) of group algebras (except when $q =2,3$). As a corollary, all Hopf algebras constructed from them by a comultiplication twist also have this property and are thus non-trivial. We also show that bismash products constructed from Frobenius groups do have the structure (as algebras) of group algebras.\end{abstract}

\maketitle

% \linenumbers

\section{Introduction}
\label{intro}

\nocite{C} \nocite{JM} \nocite{KMM} \nocite{Mo} \nocite{JL}

We first consider arbitrary finite-dimensional semisimple Hopf algebras over an algebraically closed field $k$ of characteristic zero. One may consult \cite{Mo} as a reference for Hopf algebra notation and terminology. Until recently, the only known way of constructing such Hopf algebras was to take a bicrossed product $H=k^L \#^{\tau}_{\sigma} k F$ where $L$ and $F$ are finite groups with a matched pair structure, followed possibly by a comultiplication twist i.e., by conjugating the comultiplication map $\Delta_H$ by some element $\Omega \in H \otimes H$. Hopf algebras obtained in this way are called {\em group-theoretical}. (Construction of a non group-theoretical example was recently achieved in \cite{N}.) In the special case that $H$ is a bismash product (i.e., with trivial cocycles $\tau$, $\sigma$), it is shown in \cite{KMM} how to completely determine its simple module dimensions (and thus the isomorphism class of its algebra structure, since it is semisimple) using certain actions of $L$ and $F$ on each other, although the algebra structure of bismash products was previously described in \cite{Ta}. Hopf algebras $H'$ obtained from such a $H$ by a comultiplication twist certainly have the same algebra structure as $H$. Hence, information obtained about the algebra structure of $H$ applies also to this wider class of examples. It was shown in \cite{C} that an infinite family of bismash products constructed via factorizations of symmetric groups do not have the structure, as algebras, of group algebras. In the present paper we prove a similar result corresponding to factorizations of $PGL_2(q)$. In particular, we consider the factorization $PGL_2(q) = CS$ where $C \cap S = 1$ for any Singer cycle $C$ and stabilizer $S$ of a point on the projective line, and set $H_q= k^C \# k S$. Note that dim$H_q = |PGL_2(q)| = q(q - 1)(q + 1)$. Our main result is as follows.

\begin{mainthm} {\em (Theorem \ref{main})} Let $q=p^h$ be a prime power with $h \ge 1$. Then the algebra structure of $H_q$ is not isomorphic to a group algebra except when $q=2$ or $3$.

\end{mainthm}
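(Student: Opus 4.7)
My plan is to compute the simple $H_q$-module dimensions explicitly via the matched-pair structure, and then to argue by contradiction that for $q \geq 4$ no finite group can realise these as its irreducible character degrees. Under the matched-pair action, $S$ acts on $C$ so that $s \cdot c$ is the $C$-component of $sc \in CS = PGL_2(q)$. Via the bijection $C \to \mathbb{P}^1(\mathbb{F}_q)$, $c \mapsto c(\infty)$, this transports to the natural action of $S = \mathrm{Stab}_{PGL_2(q)}(\infty) \cong \mathrm{Aff}(\mathbb{F}_q)$ on $\mathbb{P}^1(\mathbb{F}_q)$, which has two orbits on $C$: the singleton $\{1\}$ with stabiliser $S$, and a single orbit of size $q$ with stabiliser the diagonal $\mathbb{F}_q^* \leq S$. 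Combined with the standard character degrees of $\mathrm{Aff}(\mathbb{F}_q)$ ($q-1$ linear plus one of degree $q-1$) and of $\mathbb{F}_q^*$, the formula of Kashina--Mason--Montgomery yields the simple-$H_q$-module-dimension multiset $\{1^{q-1}, (q-1)^1, q^{q-1}\}$. For $q = 2, 3$ these already match the character degrees of $S_3 \cong PGL_2(2)$ and $S_4 \cong PGL_2(3)$, giving the claimed group-algebra structures; for $q \geq 4$ the remaining task is to rule this out.

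Suppose then that $G$ of order $q(q-1)(q+1)$ has the above character degrees. The $q-1$ linear characters force $|G'| = q(q+1)$. Uniqueness of the degree-$(q-1)$ character $\chi$ forces $\chi$ to be invariant under tensoring with every linear character of $G$, and hence to vanish off $G'$; Clifford theory then writes $\chi = \mathrm{Ind}_{G'}^G \psi$ for a linear character $\psi$ of $G'$ with trivial inertia. Using $\gcd(q, q-1) = 1$ twice---first to force the Clifford orbit parameter $t = 1$ for each degree-$q$ character, then to force the ramification $e = 1$---one shows that each degree-$q$ character of $G$ restricts to an irreducible of $G'$ of degree $q$, and dimension counting within $G'$ then forces $G'$ to have character degrees $\{1^q, q^1\}$, i.e.\ a unique non-linear irreducible of degree $q$. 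Seitz's classification of groups with a unique non-linear irreducible character then identifies $G'$ (which by order cannot be an extraspecial $2$-group) as a Frobenius group $K \rtimes C_q$ with $K = G''$ elementary abelian of order $q+1 = \ell^m$ for some prime $\ell$, and cyclic Singer-cycle complement of order $q$.

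The freeness of the $\psi$-orbit means the induced action $G/G' \to \mathrm{Aut}(G'/G'') = \mathrm{Aut}(C_q) = (\mathbb{Z}/q)^*$ is injective; comparing orders gives $q - 1 \mid \phi(q) = p^{h-1}(p-1)$, which together with $\gcd(q-1, p) = 1$ forces $q - 1 \mid p - 1$, impossible for $h \geq 2$. Hence $q = p$ is prime, $G/G'$ is cyclic of order $q-1$, and the relation $q+1 = \ell^m$ forces $\ell = 2$: so either $q = 2$ (already handled) or $q$ is a Mersenne prime $2^m - 1$. The main obstacle is thus the Mersenne case $q \geq 7$: here $G/G'' \cong \mathrm{Aff}(\mathbb{F}_q)$ and the extension $1 \to \mathbb{F}_2^m \to G \to \mathrm{Aff}(\mathbb{F}_q) \to 1$ has its action on $\mathbb{F}_2^m$ factoring through $N_{GL_m(\mathbb{F}_2)}(\mathrm{Singer}) = C_q \rtimes \mathrm{Gal}(\mathbb{F}_{2^m}/\mathbb{F}_2)$. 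To rule every such $G$ out I would carry out a careful Clifford analysis of the characters induced from $\mathbb{F}_2^m$ to $G$, exploiting both the rigid Frobenius structure of $G'$ and the cyclicity of $G/G'$, and show the resulting character-degree profile can never match $\{1^{q-1}, (q-1)^1, q^{q-1}\}$; this combinatorial/cohomological step, possibly requiring separate handling of the trivial and non-trivial $2$-cocycle cases, is the piece I expect to be the most technically demanding part of the argument.
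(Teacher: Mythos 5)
Your computation of the simple module dimensions and the reduction to a group $G$ with degrees $1^{q-1},(q-1),q^{q-1}$ match the paper, and your middle section is a genuinely nice variant: invoking Seitz's classification to get $G'=G''\rtimes C_q$ Frobenius with \emph{cyclic} complement, and then embedding $G/G'$ into $\mathrm{Aut}(G'/G'')\cong(\Z/q)^{*}$ to force $q-1\mid\phi(q)$ and hence $h=1$, handles the even case and the ``$h=1$'' step of the odd case in one stroke, whereas the paper does these separately (a faithful-character argument for $q=2^h$, and the factorization of $p^h+1=2^n$ for odd $q$). One small unjustified step there: your claim that the degree-$(q-1)$ character is induced from a \emph{linear} character of $G'$ with trivial inertia needs the ramification $e=1$ for that character too, and the $\gcd(q,q-1)$ argument only gives this for the degree-$q$ characters; for the degree-$(q-1)$ one you must rule out $e>1$ (e.g.\ via the observation that otherwise $G'$ would be perfect with three character degrees, contradicting Isaacs' solubility theorem, which the paper invokes).

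The genuine gap is the Mersenne case $q=p=2^m-1\ge 7$, which is where the real content of the theorem lies and which you explicitly leave as a plan (``careful Clifford analysis \dots\ trivial and non-trivial $2$-cocycle cases'') rather than a proof. Moreover, the sketched route is aimed at the wrong difficulty: the missing ingredient is not a cocycle analysis of characters induced from $G''\cong\F_2^m$, but the faithfulness of the conjugation action of $G/G''$ on $G''$, i.e.\ $C_G(G'')=G''$. Your statement that the action of $G$ on $\F_2^m$ lands in $N_{GL_m(\F_2)}(\mathrm{Singer})\cong C_q\rtimes C_m$, of order $qm$, only bounds the \emph{image} of $G$; a priori the kernel $C_G(G'')$ could be much larger than $G''$ (it is only constrained by $C_G(G'')\cap G'=G''$), and then no contradiction with $|G/G''|=q(q-1)$ follows. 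Once $C_G(G'')=G''$ is known, your own observation about the normalizer finishes the proof immediately, since $q(q-1)>qm$ for $m\ge3$; this is exactly how the paper concludes, after proving $\ker\gamma=G''$ by a character-theoretic argument using the degrees $1^{q-1},(q-1)$ of $G/G''$ (alternatively, in your setup $C_G(G'')$ is an abelian normal subgroup, so It\^o's theorem forces it to equal $G''$). As written, the proposal therefore proves the theorem only outside the Mersenne-prime values of $q$, so the main statement is not established.
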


\begin{maincor} {\em (Corollary \ref{maincor})} Any Hopf algebra $H'$ obtained from $H$ or $H^{*}$ by a comultiplication twist is non-trivial.

\end{maincor}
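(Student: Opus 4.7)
The plan is to exploit a standard fact: conjugating the comultiplication of a Hopf algebra $A$ by an invertible $\Omega \in A \otimes A$ leaves the multiplication untouched, so the underlying algebra is unchanged. Consequently, any $H'$ of the form in the statement is isomorphic, as an algebra, to either $H$ or to $H^*$. Since any Hopf algebra isomorphism $H' \to kG$ is in particular an algebra isomorphism, it suffices to show that neither $H$ nor $H^*$ is isomorphic, as an algebra, to a group algebra.

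For $H$, this is precisely Theorem A (under the standing assumption $q \neq 2, 3$). The remaining task — ruling out $H^* \cong kG$ as algebras — is where the genuine work lies. Here one first observes that $H^* = (k^C \# kS)^*$ is itself a bismash product associated to the same matched pair $(PGL_2(q); C, S)$, but with the roles of the two factors interchanged: $H^* \cong k^S \# kC$.

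From there I would finish by repeating the analysis used for Theorem A for this dual bismash product, computing the simple module dimensions of $k^S \# kC$ via the mutual actions of $S$ and $C$ on each other (in the sense of \cite{KMM}) and checking that the resulting multiset of dimensions cannot coincide with the character degrees of any finite group of order $q(q-1)(q+1)$. The main technical obstacle is precisely this last step: the orbit data governing the simples of $H^*$ is different from (though formally parallel to) that governing the simples of $H$, and one must redo the incompatibility argument for the swapped action rather than appeal to it by formal symmetry.
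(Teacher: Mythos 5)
Your opening reduction is exactly the justification the paper itself relies on: the corollary is stated there with no separate proof, resting on the remark in the introduction that a comultiplication twist leaves the algebra structure unchanged, together with Theorem \ref{main}; and the identification $H^{*}\cong k^{S}\#\, kC$ is the duality already noted in Section \ref{bismash}. Before the main point, one smaller remark: ``non-trivial'' is normally read as ``neither a group algebra nor the dual of a group algebra'', so you should also exclude $H'\cong k^{G}$. That half is cheap and worth writing down: a twist of $H$ cannot be commutative, since $H$ has simple modules of dimensions $q-1$ and $q$ (Theorem \ref{simpmods}); and if a twist of $H^{*}$ were commutative then $H^{*}$ would be commutative, hence $H$ cocommutative, hence (being semisimple over $k$) a group algebra even as an algebra, contradicting Theorem \ref{main}.

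The serious gap is that the case you yourself call ``where the genuine work lies'' --- a twist of $H^{*}$ being a group algebra, equivalently $k^{S}\#\, kC\not\cong kG$ as algebras --- is never proved: ``I would finish by repeating the analysis'' is a plan, not an argument, and it would not in fact be a repetition. Applying the dimension formula (\ref{chardims}) to the factorized group $(PGL_2(q),S,C)$, the simple modules of $k^{S}\#\, kC$ come from orbits of the cyclic group $C\cong\Z_{q+1}$ on $S$, with stabilizers inside $C$; so every dimension divides $q+1$ and the multiset looks nothing like (\ref{baddegrees}). Consequently none of the machinery of Section \ref{proof}, which exploits the two coprime nonlinear degrees $q-1$ and $q$ and the count of $q-1$ linear characters, transfers to this situation. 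For instance, at $q=4$ (so $PGL_2(4)\cong A_5$, $C\cong\Z_5$, $S\cong A_4$) one finds two fixed points and two orbits of size $5$, giving ten $1$'s and two $5$'s; this multiset is untouched by the arguments of Section \ref{proof} and is excluded only by a separate ad hoc argument (a group of order $60$ with an irreducible character of degree $5$ cannot have a normal Sylow $5$-subgroup, by Ito's theorem, so it must be $A_5$, whose degrees are $1,3,3,4,5$). So the decisive step of your proposal is missing, and it is a genuinely new problem rather than a rerun of Theorem A; note also that this is not the route the paper takes, since the paper never computes anything about $k^{S}\#\, kC$ --- its corollary is deduced from the twist-invariance of the algebra structure plus Theorem \ref{main} and duality, which is precisely why your correct observation that the $H^{*}$ group-algebra case needs extra input must be accompanied by an actual argument before the proposal can count as a proof.
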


The method used herein is to show that the simple module dimensions of the bismash product in question are \[\underbrace{1, \ldots, 1}_{q-1},\ q-1,\  \underbrace{q, \ldots, q}_{q-1},\] \noindent and then use character theory to show that they do not correspond to the irreducible character degrees of any finite group. The exceptions $q=2$ and $q=3$ correspond to the group algebras of the symmetric groups $S_3$ and $S_4$ respectively.

\section{Bismash products and their representations}
\label{bismash}

Let $G$ be a finite group with subgroups $L$ and $F$. We call the triple ($G,L,F$) a {\em factorized group} if $L\cap F = 1$ and $G=LF$. (The order in which the subgroups appear will turn out to be important.) Adopting the notation of \cite{C}, we define group actions as follows. A left action $L\times F \rightarrow F$, $l : f \mapsto {}^{[l]}f$, and a right action $L\times F \rightarrow L$, $f : l \mapsto l^{[f]}$ determined by the rule \begin{equation}
lf = ({}^{[l]}f)(l^{[f]}). \label{newrule} \end{equation} 

The {\em bismash product} $k^L \# kF$ associated with the factorized group ($G,L,F$)  is the Hopf algebra with underlying vector space $k^L \otimes kF$ (we use \# instead of $\otimes$ to emphasize that the Hopf algebra structure is constructed differently from the usual tensor product), endowed with multiplication \[(l \# f)(\overline{l} \# \overline{f}) = \delta_{l^{[f]},\overline{l}}l \# (f \overline{f}),\] \noindent and unit \[u(\alpha) = \sum_{l \in L} (l \# \alpha).\]
\noindent The algebra structure of $(k^L \# kF)^{*} = k^F \# kL$ then dualizes to give a comultiplication \label{commult} \[\Delta (l \# f) = \sum_{\hat{l} \in L} (l\hat{l}^{-1} \# {}^{[\hat{l}]}f) \otimes (\hat{l} \# f).\] \noindent We also have a counit $\varepsilon(l \# f) = \delta_{l,1}$ and antipode $S(l \# f) = (l^{[f]})^{-1} \# ({}^{[l]}f)^{-1}$. In some of the literature bismash products are defined using matched pairs of groups, but it is an elementary fact (see, eg, \cite{JM}) that there is a bijection between these two classes of objects which respects the bismash product construction.

The following fact is crucial.

\begin{lemma}

Let {\em ($G,L,F$)} be a factorized group. Then the bismash product $k^L \# kF$ is semisimple.

\end{lemma}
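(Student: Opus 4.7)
The plan is to apply the Hopf-algebraic version of Maschke's theorem (Larson--Sweedler): a finite-dimensional Hopf algebra $H$ over a field of characteristic zero is semisimple if and only if there exists a left integral $\Lambda \in H$ with $\varepsilon(\Lambda) \neq 0$. So the task reduces to writing down such a $\Lambda$ and checking the two required properties.

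The natural candidate, suggested by analogy with the group algebra $kF$, is
\[
\Lambda = \sum_{f \in F}(1_L \# f).
\]
Using the multiplication rule from the excerpt, for any basis element $l \# f'$ of $H$ one computes
\[
(l \# f')\,\Lambda \;=\; \sum_{f \in F} \delta_{l^{[f']},\,1_L}\,(l \# f'f) \;=\; \delta_{l^{[f']},\,1_L}\,\Lambda.
\]
The key observation is that $l^{[f']} = 1_L$ forces $l = 1_L$: indeed, from the factorization identity $lf' = {}^{[l]}f' \cdot l^{[f']}$, if $l^{[f']} = 1_L$ then $lf' \in F$, hence $l \in L \cap F = \{1_L\}$; the converse is immediate. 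Therefore $(l \# f')\Lambda = \delta_{l,1_L}\Lambda = \varepsilon(l \# f')\Lambda$, so $\Lambda$ is a left integral.

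Finally, $\varepsilon(\Lambda) = \sum_{f \in F}\delta_{1_L,1_L} = |F|$, which is nonzero in $k$ since $\mathrm{char}\,k = 0$. By Maschke's theorem this yields semisimplicity of $H = k^L \# kF$. The only genuinely nontrivial step is guessing the correct integral; once $\Lambda$ is in hand the verification is routine, and I do not foresee any real obstacle beyond keeping careful track of the left/right conventions in the matched-pair actions ${}^{[l]}f$ and $l^{[f]}$.
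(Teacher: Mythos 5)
Your proof is correct and follows exactly the route the paper intends: the paper merely remarks that semisimplicity follows from the Hopf-algebraic Maschke theorem, and you have supplied the missing details by exhibiting the explicit left integral $\Lambda=\sum_{f\in F}(1\#f)$, verifying $h\Lambda=\varepsilon(h)\Lambda$ via the key observation that $l^{[f]}=1$ forces $l\in L\cap F=1$, and noting $\varepsilon(\Lambda)=|F|\neq 0$ in characteristic zero.
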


This can be proved using a version of Maschke's theorem for Hopf algebras. Next, we describe how to calculate the dimensions of the simple modules of a bismash product, which, by the above, completely determines its $k$-algebra structure. This will provide us with a means of attacking the question of whether this structure is the same as some group algebra. For this we will need the following, which is essentially Corollary 3.5 of \cite{KMM}.

\begin{lemma}

Let {\em ($G,L,F$)} be a factorized group and let $l_1, \ldots,\ l_n$ be a complete set of representatives of the $F$-orbits in $L$. Denote the irreducible character degrees of $F_{l_i}= \ ${\em Stab}$_F(l_i)$ by $d_{i,1}, \ldots,\ d_{i,r_i}.$ Then the dimensions of the simple modules of $k^L \# kF$ are \begin{equation} \label{chardims}
d_{i,1}[F : F_{l_i}], \,\ldots\, ,\  d_{i,r_i}[F : F_{l_i}]\ \ \mbox{for } 1 \le i \le n.
\end{equation}
\end{lemma}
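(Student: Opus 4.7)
My approach is Clifford theory applied to the smash product $A := k^L \# kF$. Three preliminary facts, all immediate from the multiplication rule, set up the machinery. First, the elements $e_l := \delta_l \# 1$ for $l \in L$ form a complete orthogonal system of idempotents in $A$ summing to $1$. Second, a direct calculation gives $(1\#f)(1\#f') = 1\#ff'$, so the span of $\{1\#f : f \in F\}$ is a subalgebra of $A$ isomorphic to $kF$. Third, $(1\#f)\,e_l = e_{l^{[f^{-1}]}}\,(1\#f)$, so conjugation by $1\#f$ permutes the idempotents in accordance with the $F$-action on $L$.

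Given these, for any simple $A$-module $V$ I would weight-decompose $V = \bigoplus_l V_l$ with $V_l := e_l V$. The commutation relation forces $(1\#f) V_l \subseteq V_{l^{[f^{-1}]}}$ and is invertible, so the support $\{l : V_l \neq 0\}$ is $F$-stable with all weight spaces in a given $F$-orbit having equal dimension. Since each orbit's contribution is itself an $A$-submodule, simplicity of $V$ forces the support to be a single orbit $F\cdot l_i$, whence $\dim V = [F:F_{l_i}]\dim V_{l_i}$. The weight space $V_{l_i}$ is stable under $F_{l_i}$ via the ordinary group action (inherited from the $kF$-subalgebra), so it is a module for $F_{l_i}$ in the usual sense.

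For the converse, given an irreducible $F_{l_i}$-representation $W$ I would inflate it to a module over the subalgebra $B := k^L \# kF_{l_i}$ by letting $e_l$ act as $\delta_{l,l_i}\,\mathrm{id}_W$ (well defined because $F_{l_i}$ fixes $l_i$, which keeps the action compatible with the commutation relation inside $B$), and then form the induced module $V(W) := A \otimes_B W$ of dimension $[F:F_{l_i}]\dim W$. The main obstacle I anticipate is verifying that these two constructions are mutually inverse: that $V(W)$ is simple and, conversely, that $V_{l_i}$ is irreducible over $F_{l_i}$. Both rest on the same observation — any $A$-submodule respects the weight decomposition, and its intersection with $V_{l_i}$ (respectively, $1 \otimes W$) is $F_{l_i}$-stable — so irreducibility at a single weight space propagates. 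Once the bijection is in place, the dimension formula of the lemma falls out, and the sanity check $\sum_{i,j}(d_{i,j}[F:F_{l_i}])^2 = \sum_i [F:F_{l_i}]\,|F| = |F|\,|L| = \dim A$ confirms that no simple module has been missed.
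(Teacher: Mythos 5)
Your proof is correct in outline, and it is worth saying that the paper itself offers no argument for this lemma: it simply attributes the statement to Corollary 3.5 of \cite{KMM}. Your route is the standard one behind that citation: as an algebra $k^L\#kF$ is just the skew group algebra of $F$ acting on $k^L$, and its simple modules are induced from stabilizers. The orthogonal idempotents $e_l$, the weight decomposition $V=\bigoplus_l e_lV$, the single-orbit support argument, the identification of $V_{l_i}$ as a $kF_{l_i}$-module, and induction from $B=k^L\#kF_{l_i}$ are exactly the right steps, and the two verifications you flag (that $V\mapsto V_{l_i}$ and $W\mapsto A\otimes_B W$ are mutually inverse) do go through by the propagation argument you sketch, using that $A$ is free of rank $[F:F_{l_i}]$ as a right $B$-module. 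Two points to make explicit in a write-up. First, ``$1\#f$'' must mean $1_{k^L}\#f=\sum_{l\in L}\delta_l\#f$; since the paper writes $l\#f$ for $\delta_l\otimes f$, the symbol could be misread as $\delta_1\#f$, for which the commutation relation degenerates and invertibility in $A$ fails, so state the convention. Second, the closing sum-of-squares check certifies completeness only in combination with the semisimplicity of $k^L\#kF$ (the preceding lemma of the paper) and the pairwise non-isomorphism of the modules you construct (distinct orbits are separated by their supports, and within an orbit the asserted bijection does the work); alternatively, completeness already follows from your first half, since every simple $V$ is recovered as $A\otimes_B V_{l_i}$.
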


\noindent Note that we use the right action of $F$ on $L$ here. The other action is not needed.

\section{A family of Bismash products associated with $PGL_2(q)$}
\label{pgl}

Let $q=p^h$, where $p$ is prime and $h \ge 1$. If $H$ is subgroup of some general linear group then we shall denote its natural image in the corresponding projective general linear group by $H /\!\!\sim$. We now describe an infinite family of semisimple Hopf algebras, denoted by $H_q$, associated with $PGL_2(q)$, including their simple module dimensions.

It is well known that $GL_n(q)$ always contains cyclic subgroups of order $q^n - 1$. These, and their images in $PGL_n(q)$ (which have order $(q^n-1)/(q-1)$), are called {\em Singer cycles}. See \cite{Hu}, page 187 onwards, for more details. Let $C \le PGL_n(q)$ be a Singer cycle. Then $C$ acts regularly on the points of the projective space $\Pro^{n-1}(q)$. It follows that $PGL_n(q) = CS$ with $S \cap C = 1$, where $S$ is a point stabilizer of the natural action of $PGL_n(q)$ on $\Pro^{n-1}(q)$. We can thus consider the associated bismash product $k^C \# k S$, whose structure is completely determined by the factorized group $(PGL_2(q),C,S)$, as explained previously. (Note that this construction is independent of the choice of Singer cycle and/or point stabilizer. Indeed the Singer cycles comprise a complete set of conjugate subgroups, as do the point stabilizers. Replacing a factor by a conjugate in a factorized group gives rise to an isomorphic bismash product.) 

We now calculate the dimensions of the simple modules of the bismash product $H_q=k^C \# k S$ associated with $PGL_2(q)$. Let 

\begin{equation}
X^2 + \mu X + \lambda \label{mueq}
\end{equation}

\noindent be a primitive polynomial of degree $2$ over $\F_q$, and let

\[ x = \left( \begin{array}{cc}
0 & 1 \\
-\lambda & -\mu \end{array} \right).\]

\noindent One can check (by diagonalizing in $GL_n(q^n)$ for instance) that $\langle x \rangle$ is indeed a Singer cycle. Then $C = \langle x \rangle /\!\! \sim \  = \langle \overline{x} \rangle$  is a Singer cycle in $PGL_2(q)$. Together with the stabilizer $S$ of the point $[1:0]^T$,

\[S= \left\{ \left. \left( \begin{array}{cc}
a & b \\
0 & c \end{array} \right) \ \right| \ \begin{array}{c} 
a, c \in \F_{q}^{*} \\
b \in \F_q \end{array} \right\} \mbox{\bf \large /{\large $\sim$}},
\]

\noindent we have a factorized group $(PGL_n(q), C, S)$ from which we can construct the bismash product $k^C \# k S$. Recall that in order to calculate the dimensions of the simple modules we must consider the right action \[C \times S \rightarrow C,\ s:c \mapsto c^{[s]}\] defined by the rule $cs = s'c^{[s]}$, for some $s' \in S$. First we determine the orbits. Clearly $ 1 $ is an orbit on its own. The following completes the picture.

\begin{lemma} \label{transitive}

Let $u_1, u_2, \ldots, u_q$ denote the elements of \[U = \left\{ \left. \left( \begin{array}{cc}
d & t \\
0 & d \end{array} \right) \right| \ \begin{array}{c} 
d \in \F_{q}^{*} \\
t \in \F_q \end{array} \right\} \mbox{\bf \large /{\large $\sim$}} \ \ \le S.
\] \noindent Then $x^{[u_1]}, \ldots, x^{[u_q]}$ are distinct. In particular, the action of $S$ on $C^{\#}$ is transitive.

\end{lemma}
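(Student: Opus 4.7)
The plan is to exploit the count $|U| = q = |C^{\#}|$, the latter because $|C| = [PGL_2(q) : S] = q+1$ since $S$ is a point stabilizer for the natural action on $\Pro^1(q)$. Once the injectivity of the map $u \mapsto x^{[u]}$ from $U$ into $C$ is established, its image will consist of $q$ distinct elements of $C$, all of which must lie in $C^{\#}$: for if $x^{[u]} = 1$ then $xu = s \cdot 1 \in S$, forcing $x \in S$, which contradicts $x \cdot [1:0]^T = [0:1]^T \ne [1:0]^T$. Hence the $U$-orbit of $x$ (under the right action) is all of $C^{\#}$, and a fortiori the $S$-orbit of $x$ is $C^{\#}$, giving transitivity.

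For injectivity, suppose $x^{[u_1]} = x^{[u_2]}$. Using the defining relation $x u_i = s_i \cdot x^{[u_i]}$ for suitable $s_i \in S$ and dividing, one obtains
\[x(u_1 u_2^{-1}) x^{-1} = s_1 s_2^{-1} \in S.\]
So injectivity reduces to the claim $x U x^{-1} \cap S = \{1\}$, i.e., that conjugation by $x$ sends no non-identity element of $U$ into $S$.

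This last step I would verify by a direct $2 \times 2$ matrix computation in $GL_2(q)$: taking the representative $\bigl(\begin{smallmatrix} 1 & t \\ 0 & 1 \end{smallmatrix}\bigr)$ for a general element of $U$ and computing its conjugate by $x$, one obtains, after rescaling by the scalar $\lambda = \det x$, a representative of the form $\bigl(\begin{smallmatrix} 1 & 0 \\ -\lambda t & 1 \end{smallmatrix}\bigr)$. Since $\lambda \ne 0$ (the polynomial $X^2 + \mu X + \lambda$ is primitive, so in particular nonzero at $0$), this matrix is upper triangular modulo scalars only when $t = 0$, which corresponds to the identity of $U$. I foresee no real obstacle beyond keeping the projective normalisations straight in the matrix arithmetic; as a conceptual cross-check, $x$ interchanges the two Borels through the diagonal torus (since it moves the point $[1:0]^T$ off itself), so $xUx^{-1}$ lands in the unipotent radical of the opposite Borel, which meets $S$ only in the identity.
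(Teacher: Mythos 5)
Your proof is correct and complete: the paper states Lemma \ref{transitive} without proof, leaving the verification to the reader, and your argument --- reducing injectivity of $u \mapsto x^{[u]}$ to the claim $xUx^{-1} \cap S = \{1\}$ via the unique factorizations $xu_i = s_i x^{[u_i]}$, verifying by direct computation that conjugation by $x$ sends $\bigl(\begin{smallmatrix}1 & t\\ 0 & 1\end{smallmatrix}\bigr)$ to $\bigl(\begin{smallmatrix}1 & 0\\ -\lambda t & 1\end{smallmatrix}\bigr)$ (upper triangular mod scalars only for $t=0$, as $\lambda\neq 0$), ruling out $x^{[u]}=1$ because $x\notin S$, and then counting $|U| = |C^{\#}| = q$ --- is exactly the direct check that is needed. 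The only slip is in your parenthetical cross-check: $x$ conjugates the Borel $S$ onto the opposite Borel (the stabilizer of $[0:1]$) but does not in general interchange the two, since $x$ sends $[0:1]$ to $[1:-\mu]$; this is harmless, as the matrix computation you actually rely on already establishes $xUx^{-1}\cap S=\{1\}$.
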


Next we must determine the character degrees of point stabilizers corresponding to these orbits.

\begin{lemma}

The irreducible character degrees of {\em \,Stab}${}_S(1) = S$ are $\underbrace{1, \ldots, 1}_{q-1},\ q-1$.

\end{lemma}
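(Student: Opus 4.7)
The plan is to identify $S$ explicitly as the affine group of the line $AGL_1(q) = \F_q \rtimes \F_q^*$ of order $q(q-1)$, for which the character degrees are well known.

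First I would observe that every coset in $S$ has a unique representative with lower-right entry equal to $1$: namely, multiply the matrix $\bigl(\begin{smallmatrix} a & b \\ 0 & c \end{smallmatrix}\bigr)$ through by $c^{-1}$. This sets up a bijection between $S$ and $\F_q^* \times \F_q$ under which the group law becomes $(a,b)(a',b') = (aa', ab'+b)$, so $S \cong \F_q \rtimes \F_q^*$, where $\F_q^*$ acts on $\F_q$ by multiplication. In particular $|S| = q(q-1)$.

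Next I would single out the normal abelian subgroup $N = \{(1,b) : b \in \F_q\} \cong \F_q^+$, with quotient $S/N \cong \F_q^*$ cyclic of order $q-1$. Inflating the $q-1$ characters of $S/N$ to $S$ yields $q-1$ linear characters. To exhibit the remaining irreducible character I would apply Clifford theory to $N \triangleleft S$. The induced action of $S/N \cong \F_q^*$ on $\widehat{N} \cong \F_q^+$ is again multiplication, so it fixes only the trivial character and acts regularly on the $q-1$ non-trivial characters. Consequently, for any non-trivial $\psi \in \widehat{N}$ the inertia subgroup is $N$ itself, and $\mathrm{Ind}_N^S \psi$ is irreducible of dimension $[S:N] = q-1$; moreover all such $\psi$ induce (up to isomorphism) the same character of $S$.

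A sum-of-squares check $(q-1)\cdot 1^2 + (q-1)^2 = q(q-1) = |S|$ confirms that the list $\underbrace{1,\ldots,1}_{q-1},\,q-1$ is complete. There is essentially no obstacle beyond the bookkeeping for the projective quotient; once $S$ is identified with $AGL_1(q)$, the character degrees are a standard computation.
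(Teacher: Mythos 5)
Your proof is correct and takes essentially the same route as the paper: the paper simply observes that $S$ is a Frobenius group with kernel $K$ (which, after your normalization of the lower-right entry, is exactly your $N \cong \F_q^+$) and complement $H \cong \F_q^*$, and appeals to the standard character theory of Frobenius groups with abelian kernel. Your inflation-plus-Clifford-theory computation for $AGL_1(q) = \F_q \rtimes \F_q^*$ is precisely that standard argument written out in full, so there is no substantive difference.
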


\begin{proof}

This follows from the fact that $S$ is a Frobenius group with Frobenius kernel \[K= \left\{ \left. \left( \begin{array}{cc}
a & b \\
0 & a \end{array} \right) \right| \ \begin{array}{c} 
a \in \F_{q}^{*} \\
b \in \F_q \end{array} \right\} \mbox{\bf \large /{\large $\sim$}}
\] \noindent and Frobenius complement \[H= \left\{ \left. \left( \begin{array}{cc}
d & 0 \\
0 & e \end{array} \right) \right| \ 
d, e \in \F_{q}^{*}  \right\} \mbox{\bf \large /{\large $\sim$}}.\qedhere
\]
\end{proof}

\begin{lemma} \label{stab}

{\em Stab}${}_S(\overline{x}) \cong \Z_{q-1}$.

\end{lemma}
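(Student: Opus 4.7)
The plan is to identify $\mathrm{Stab}_S(\bar x)$ geometrically as the pointwise stabilizer of a pair of distinct points on the projective line, and then invoke the standard fact that such a stabilizer in $PGL_2(q)$ is a conjugate of the diagonal split torus.

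First I would unpack the definition of the right action. By the rule $cs = s' c^{[s]}$, the condition $\bar x^{[s]} = \bar x$ becomes $\bar x s = s' \bar x$ for some $s' \in S$, equivalently $s \in \bar x^{-1} S \bar x$. Hence
\[
\mathrm{Stab}_S(\bar x) \;=\; S \cap \bar x^{-1} S \bar x.
\]
Since $S$ is the stabilizer of $P = [1:0]^T \in \Pro^{1}(q)$, the conjugate $\bar x^{-1} S \bar x$ is the stabilizer of $\bar x^{-1}\cdot P$, and therefore $\mathrm{Stab}_S(\bar x)$ is the pointwise stabilizer of the pair $\{P,\ \bar x^{-1}\cdot P\}$.

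Next I would observe that these two points are distinct. The Singer cycle $C = \langle \bar x\rangle$ acts regularly on $\Pro^{1}(q)$, so $\bar x^{-1}$ has no fixed point and $\bar x^{-1}\cdot P \ne P$. It then remains to identify the pointwise stabilizer of two distinct points in $\Pro^1(q)$ under $PGL_2(q)$. Using the $3$-transitivity of $PGL_2(q)$ on $\Pro^1(q)$ (or just conjugating by an element of $PGL_2(q)$), this stabilizer is conjugate to the stabilizer of $\{[1:0]^T,[0:1]^T\}$, which is the image in $PGL_2(q)$ of the diagonal subgroup
\[
\left\{\left(\begin{array}{cc} d & 0 \\ 0 & e \end{array}\right)\ \Big|\ d,e \in \F_q^{*}\right\},
\]
and this image is cyclic of order $q-1$, isomorphic to $\F_q^{*}$ via $\mathrm{diag}(d,e)\mapsto d e^{-1}$. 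This gives $\mathrm{Stab}_S(\bar x) \cong \Z_{q-1}$.

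As a cross-check for the order, orbit–stabilizer combined with Lemma \ref{transitive} yields $|\mathrm{Stab}_S(\bar x)| = |S|/|C^{\#}| = q(q-1)/q = q-1$, which matches. The only non-routine step is the opening translation from the bismash-product action to genuine conjugation in $PGL_2(q)$; once one sees $\mathrm{Stab}_S(\bar x) = S \cap \bar x^{-1} S \bar x$, the rest is classical $PGL_2$ geometry.
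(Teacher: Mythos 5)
Your proof is correct, and it takes a genuinely different route from the paper's. The paper stays inside the group $S$: combining Lemma \ref{transitive} with orbit--stabilizer gives $|\mathrm{Stab}_S(\overline{x})| = q-1$, and since any subgroup of $S$ of order $q-1$ is a complement to the Frobenius kernel $K$, the Schur--Zassenhaus theorem makes all such subgroups conjugate; hence the $S$-action on $C$ is isomorphic to the natural action on the projective line and the stabilizer is conjugate to the diagonal subgroup $H \cong \Z_{q-1}$. You instead unwind the bismash action directly: from $\overline{x}s = s'\,\overline{x}^{[s]}$ and uniqueness of the $S\cdot C$ factorization you get $\mathrm{Stab}_S(\overline{x}) = S \cap \overline{x}^{-1}S\overline{x}$, identify this as the pointwise stabilizer in $PGL_2(q)$ of the two points $[1:0]^T$ and $\overline{x}^{-1}\cdot[1:0]^T$ (distinct because the Singer cycle acts regularly), and conclude by $2$-transitivity of $PGL_2(q)$ on $\Pro^1(q)$ that this is conjugate to the image of the diagonal torus, which is cyclic of order $q-1$. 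Your route avoids both Lemma \ref{transitive} and Schur--Zassenhaus --- indeed, since $|S| = q(q-1)$, your computation of the stabilizer re-proves the transitivity of $S$ on $C^{\#}$ by orbit--stabilizer --- at the cost of invoking the classical geometry of $PGL_2(q)$; the paper's argument is purely internal to $S$ and yields the slightly stronger conclusion that the $S$-set $C$ is isomorphic to the natural $S$-set $\Pro^1(q)$.
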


\begin{proof}

First we make the observation that the action of $S$ on $C$ is isomorphic to its natural action on the projective line. A sufficient condition for this is that $S$ only has one conjugacy class of subgroups of order $q-1$. One may check this by hand or note that any subgroup of this order is a complement to $K$, and that by the Schur-Zassenhaus theorem  any two complements of a normal Hall subgroup are conjugate. \end{proof}

The above yields the following.

\begin{thm} \label{simpmods}

The dimensions of the simple modules of $H_q$ are

\[\underbrace{1, \ldots, 1}_{q-1},\ q-1,\  \underbrace{q, \ldots, q}_{q-1}.\]

\end{thm}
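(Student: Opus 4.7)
The proof is essentially a bookkeeping exercise applying the dimension formula (\ref{chardims}) to the orbit and stabilizer data already established. My plan is to identify the $S$-orbits on $C$, determine the point stabilizers, extract their irreducible character degrees, and then assemble the list.

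First I would observe that Lemma \ref{transitive} yields exactly two $S$-orbits on $C$: the fixed point $\{1\}$ and the complement $C^{\#}$, which I would represent by $\overline{x}$. Thus I need data only for $S_1 = S$ and $S_{\overline{x}}$.

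Next I would apply (\ref{chardims}) to each orbit. For the orbit $\{1\}$, the stabilizer is all of $S$, so $[S : S_1] = 1$ and the contribution to the list is simply the character degrees of $S$, which by the preceding lemma are $\underbrace{1,\dots,1}_{q-1}$ and $q-1$. For the orbit through $\overline{x}$, Lemma \ref{stab} gives $S_{\overline{x}} \cong \Z_{q-1}$, which is abelian, so all $q-1$ of its irreducible character degrees equal $1$. The index is
\[
[S : S_{\overline{x}}] = \frac{|S|}{|S_{\overline{x}}|} = \frac{q(q-1)}{q-1} = q,
\]
so this orbit contributes $q-1$ copies of $q$ to the list. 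Concatenating the two contributions yields precisely $\underbrace{1, \ldots, 1}_{q-1},\ q-1,\ \underbrace{q, \ldots, q}_{q-1}$, as claimed.

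There is no real obstacle here; the preparatory lemmas have already isolated all the nontrivial content (the transitivity of $S$ on $C^{\#}$, the Frobenius structure of $S$, and the identification of the stabilizer of $\overline{x}$). The only things to check are the straightforward index calculation above and the fact that the total dimension $(q-1)\cdot 1 + (q-1)^2 + (q-1)q^2 = q(q-1)(q+1) = \dim H_q$ agrees, which serves as a useful sanity check but is not logically required.
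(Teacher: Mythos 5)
Your proposal is correct and follows exactly the route the paper intends: the theorem is stated after the preparatory lemmas with "The above yields the following," i.e.\ the proof is precisely the assembly via the dimension formula (\ref{chardims}) of the two orbits $\{1\}$ and $C^{\#}$ (Lemma \ref{transitive}), the character degrees of $S$ as a Frobenius group, and the abelian stabilizer $\Z_{q-1}$ from Lemma \ref{stab}, with the index computation $[S:S_{\overline{x}}]=q$ exactly as you give it.
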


\section{Proof of the main result}
\label{proof}

In this section we prove Theorem A from the introduction.

\begin{thm} \label{main}

Let $q=p^h$ be a prime power with $h \ge 1$. Then the algebra structure of $H_q$ is not isomorphic to a group algebra except when $q=2$ or $3$.

\end{thm}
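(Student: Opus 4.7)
The plan is to suppose for contradiction that $G$ is a finite group whose irreducible character degrees are $\underbrace{1,\ldots,1}_{q-1},\,q-1,\,\underbrace{q,\ldots,q}_{q-1}$. Summing squares yields $|G|=q(q-1)(q+1)$ and the linear-character count gives $[G:G']=q-1$, hence $|G'|=q(q+1)$. Because $G/G'$ is abelian, Clifford's theorem together with Gallagher's extension theorem shows that all irreducibles of $G$ lying above a fixed $G$-orbit in $\mathrm{Irr}(G')$ share a common degree, and that above a $G$-invariant $\theta\in\mathrm{Irr}(G')$ one obtains $(q-1)/n^2$ characters each of degree $\theta(1)\,n$, where $n$ is a common projective dimension determined by the orbit. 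Exploiting $\gcd(q,q-1)=1$, these counts force $\mathrm{Irr}(G')$ to decompose into exactly three $G$-orbits: the trivial character (producing the $q-1$ linear characters of $G$), a single $G$-invariant $\theta_0$ of degree $q$ (producing the $q-1$ characters of $G$ of degree $q$), and a third orbit of $t$ characters of common degree $f$ with $tf^2=q-1$ (producing the unique character of $G$ of degree $q-1$).

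I would then split on whether $f\geq 2$ (Case A) or $f=1$, $t=q-1$ (Case B). In Case A, $G'$ is perfect and its irreducible degrees are $1$, $q$, and $f$, with $f\mid q-1$. For each odd prime $r$ dividing $q+1$ we have $r\nmid q$ and, since $\gcd(q+1,q-1)\mid 2$, also $r\nmid f$, so Ito--Michler's theorem forces the Sylow $r$-subgroup of $G'$ to be normal and abelian. Let $N\trianglelefteq G'$ be the product of these Sylow subgroups; it is a normal abelian Hall subgroup of order equal to the odd part of $q+1$, so $|G'/N|=q\cdot 2^{v_2(q+1)}$ is divisible by at most the two primes $p$ and $2$. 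Burnside's $p^aq^b$-theorem then makes $G'/N$ solvable; but as a quotient of the perfect group $G'$ it is also perfect, hence trivial. Thus $G'=N$ is abelian, contradicting $G'$ perfect with $|G'|>1$.

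In Case B, $G'$ has $q$ linear characters, so $[G':G'']=q$ and $|G''|=q+1$. I would apply Clifford to $\theta_0|_{G''}$: letting $\psi$ be a constituent with $G'$-orbit of size $t'$, multiplicity $e$, and inertia $T$, the identities $q=et'\psi(1)$ and $\psi(1)\mid|G''|=q+1$ combined with $\gcd(q,q+1)=1$ force $\psi(1)=1$ and $e=q/t'$. The Schur relation $\sum_\chi e_\chi^2=[T:G'']=q/t'$ then gives $(q/t')^2\leq q/t'$, whence $t'=q$. Consequently $\theta_0|_{G''}=\psi_1+\cdots+\psi_q$ is a sum of $q$ distinct $G'$-conjugate non-trivial linear characters of $G''$, forcing $G''$ abelian with $[G'':G''']=q+1$. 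Dualising the $G'/G''$-action shows $G'/G''$ acts transitively on $G''\setminus\{1\}$, so $G''$ is elementary abelian of order $q+1=r^m$ and $G'$ is Frobenius with cyclic (Singer) complement $H\cong G'/G''\cong C_q$; thus $G'\cong\mathrm{AGL}_1(r^m)$ and $\mathrm{Out}(G')\cong\mathrm{Gal}(\F_{r^m}/\F_r)\cong C_m$. Since the $G$-action on $G'/G''\cong C_q$ factors through $\mathrm{Out}(G')$, its image in $\mathrm{Aut}(C_q)$ has order at most $m$; but transitivity on the $q-1$ non-identity elements of $C_q$ (needed to produce the degree-$(q-1)$ character) forces $m\geq q-1$, i.e.\ $q+1\geq r^{q-1}\geq 2^{q-1}$, which fails for every $q\geq 4$.

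The main obstacle will be Case B: the Clifford computation that pins down the orbit size $t'$ as exactly $q$ (and hence forces $G''$ to be abelian), together with the careful identification of $\mathrm{Out}$ of the Frobenius group $G'\cong\mathrm{AGL}_1(r^m)$ in order to bound the image of $G/G'$ in $\mathrm{Aut}(G'/G'')$. Case A is cleaner and reduces quickly once Ito--Michler is invoked on the character degrees of $G'$. The orbit decomposition in the opening paragraph is a routine Clifford-and-Gallagher calculation that exploits the coprimality $\gcd(q,q-1)=1$.
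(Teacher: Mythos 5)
Your outline is correct, and it takes a genuinely different route from the paper, although both begin the same way: assume a group $G$ has character degrees $1^{(q-1)},\,q-1,\,q^{(q-1)}$, so that $[G:G']=q-1$ and $|G'|=q(q+1)$, and then analyse $G'$ and $G''$. The paper gets its structural control from Isaacs' three-character-degree theorem (solubility, derived length $\le 3$), proves $G'$ has degrees $1^{(q)},q$ and $G''$ abelian, then embeds $G/G''$ into $\mathrm{Aut}(G'')\cong GL_n(\hat p)$ with $\hat p^n=q+1$ and splits by parity: for $q$ even a faithful-character argument forces $n\ge q-1$, hence $q+1\ge 2^{q-1}$; for $q$ odd it first forces $q+1=2^n$ with $h=1$ (Mersenne case) and then computes the order of the normalizer of a Singer cycle in $GL_n(2)$ by hand. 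You avoid Isaacs entirely: your Case A disposes of the possibility that $G'$ is perfect via Ito--Michler plus Burnside (valid, but much heavier machinery than anything the paper uses -- Michler's direction rests on the classification, whereas the paper needs only textbook character theory), and in Case B you get $G''$ abelian directly from the Clifford analysis of $\theta_0\!\downarrow_{G''}$ (the $e=q/t'$ versus $\sum e_\chi^2=[T:G'']$ squeeze is correct), then identify $G'\cong \mathrm{AGL}_1(r^m)$ (here you should note explicitly that the complement is cyclic because $G'/G''$ is abelian and abelian Frobenius complements are cyclic, and that both "dualising" steps use the Brauer orbit-counting lemma) and conclude via $|\mathrm{Out}(\mathrm{AGL}_1(r^m))|=m$ together with transitivity of $G$ on $(G'/G'')\setminus\{1\}$, giving $m\ge q-1$ and $q+1=r^m\ge 2^{q-1}$. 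That final inequality is the same one the paper uses in the even case, but your argument is uniform in the parity of $q$ and localizes the obstruction in the smallness of $\mathrm{Out}(\mathrm{AGL}_1(q+1))$; the price is that the fact $\mathrm{Aut}(\mathrm{AGL}_1(r^m))=\mathrm{A\Gamma L}_1(r^m)$ is essentially the Singer-normalizer computation that the paper carries out explicitly for $GL_n(2)$, so it should be cited or proved rather than asserted. A small simplification available to you: once you know $G$ is transitive on the non-identity elements of $G'/G''\cong\Z_{q}$, the case $h\ge 2$ dies immediately (elements of different orders cannot be conjugate under automorphisms), so the $\mathrm{Out}$ bound is only needed for $q$ prime. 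Finally, to prove the theorem as stated you should also record, as the paper does, that $q=2,3$ really are exceptions, realized by $kS_3$ and $kS_4$.
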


In order to prove this, we shall initially assume that there does exist a group (of order $q(q-1)(q+1)$) whose group algebra is isomorphic to $H_q$, and prove a series of lemmas leading to a contradiction. Clearly, by Theorem \ref{simpmods}, such a group would have irreducible character degrees \begin{equation}
\underbrace{1, \ldots, 1}_{q-1},\ q-1,\  \underbrace{q, \ldots, q}_{q-1}, \label{baddegrees}
\end{equation} \noindent and so we use character theory to show that this can not be the case. For a contradiction, we shall assume for the rest of this chapter that $G$ denotes a group with these irreducible character degrees, unless stated otherwise. The following lemma will be used several times.

\begin{lemma} \label{nabquotientL}

If $N$ is a non-identity normal subgroup of $G$ such that $G/N$ is nonabelian, then the character degrees of $G/N$ are \begin{equation}
\underbrace{1, \ldots, 1}_{q-1},\ q-1. \label{nabquotient}
\end{equation}

\noindent Moreover, $(G/N)'$ is an elementary abelian $p$-group.

\end{lemma}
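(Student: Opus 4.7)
Let $\ov G = G/N$. Every irreducible character of $\ov G$ lifts to a character of $G$ trivial on $N$, so the irreducible degrees of $\ov G$ lie in $\{1, q-1, q\}$ with multiplicities bounded by those of $G$. Let $a$, $b$, $c$ count the irreducible characters of $\ov G$ of degrees $1$, $q-1$, $q$ respectively, so $0 \le a \le q-1$, $b \in \{0,1\}$, $c \in \{0, \ldots, q-1\}$, and $|\ov G| = a + b(q-1)^2 + cq^2$ divides $|G| = q(q-1)(q+1)$; non-abelianness gives $b + c \ge 1$. My plan is first to force $(a,b,c) = (q-1, 1, 0)$, and then to analyse the unique non-linear character of $\ov G$ to describe $\ov G'$.

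For the first step I reduce modulo $q$: since $(q-1)^2 \equiv 1$ and $q^2 \equiv 0 \pmod q$, one has $|\ov G| \equiv a + b \pmod q$. If $c \ge 1$ then $q \mid |\ov G|$, and the size bounds pin down $(a,b) = (q-1, 1)$. A short divisibility check of $|\ov G|/q = (c+1)q - 1$ against $|G|/q = q^2 - 1$ then leaves only $c = 0$ or $c = q-1$, and the latter yields $|\ov G| = |G|$, i.e.\ $N = 1$, contrary to hypothesis. So $c = 0$, hence $b = 1$, and since the character degree $q-1$ must divide $|\ov G| = a + (q-1)^2$, we obtain $a = q-1$. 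This proves the first assertion and gives $|\ov G'| = q = p^h$.

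For the second assertion, let $\chi$ denote the unique non-linear irreducible character of $\ov G$. For each linear $\lambda$ the product $\chi\lambda$ is irreducible of degree $q-1$, hence equals $\chi$; comparing degrees in the decomposition of $\chi\ov\chi$ then yields $\chi\ov\chi = \sum_\lambda \lambda + (q-2)\chi$. Evaluating at $g$ and using $\sum_\lambda \lambda(g) = q-1$ for $g \in \ov G'$ and $0$ otherwise pins down $\chi(g) \in \{0, q-2\}$ for $g \notin \ov G'$ and $\chi(g) \in \{-1, q-1\}$ for $g \in \ov G' \setminus \{1\}$. The orthogonality relation $\sum_g \chi(g) = 0$ then forces $\chi \equiv -1$ on $\ov G' \setminus \{1\}$ and $\chi \equiv 0$ off $\ov G'$. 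By column orthogonality $|C_{\ov G}(g)| = (q-1) + |\chi(g)|^2 = q$ for $g \in \ov G' \setminus \{1\}$, so such a $g$ has $\ov G$-conjugacy class of size $q-1$; since this class lies in the normal subgroup $\ov G'$ and $|\ov G'\setminus\{1\}| = q-1$, it exhausts $\ov G'\setminus\{1\}$.

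Thus $\ov G$ acts by conjugation transitively on the non-identity elements of the $p$-group $\ov G'$. Because $Z(\ov G')$ is characteristic in $\ov G'$ and hence normal in $\ov G$, its non-triviality combined with transitivity forces $Z(\ov G') = \ov G'$, so $\ov G'$ is abelian; then all non-identity elements of $\ov G'$ share a common order, which must be $p$ (otherwise $x$ and $x^{p^{k-1}}$ would have different orders within the single orbit). Hence $\ov G'$ is elementary abelian. The main obstacle is the arithmetic ruling out $c \ge 1$ in the first part; once past that, the character-theoretic analysis cascades naturally from the decomposition of $\chi\ov\chi$.
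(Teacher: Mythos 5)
Your proof is correct, and while the first half runs along the same lines as the paper, the second half takes a genuinely different route. For the degree list, the paper does essentially what you do: it counts the characters of each possible degree ($1$, $q-1$, $q$) in $G/N$ and plays congruences modulo $q$ and modulo $q-1$ (together with ``degree divides group order'') against the bounds on the multiplicities, concluding that a degree-$q$ character would force $N=1$; your variant, pinning $(a,b)=(q-1,1)$ mod $q$ and then running the divisor-of-$q^2-1$ argument from $|G/N|$ dividing $|G|$, is the same kind of arithmetic. The divergence is in the ``moreover'' clause. The paper observes that $|(G/N)'|=q$ makes $G/N$ soluble, that by the first part every normal subgroup with nonabelian quotient has the same index, so $N$ is maximal with nonabelian quotient, hence $(G/N)'$ lies in every non-identity normal subgroup of $G/N$ and is therefore a minimal normal subgroup of a soluble group, which is elementary abelian by a standard fact. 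You instead determine the full character table of $G/N$: from $\chi\lambda=\chi$ you get $\chi\overline{\chi}=\sum_\lambda\lambda+(q-2)\chi$, deduce that $\chi$ is $-1$ on $(G/N)'\setminus\{1\}$ and $0$ outside $(G/N)'$, and read off via column orthogonality that $G/N$ acts transitively by conjugation on $(G/N)'\setminus\{1\}$, from which abelian (via the centre being normal in $G/N$) and exponent $p$ follow directly. Your route is self-contained --- no solubility or minimal-normal-subgroup theory --- and actually proves more (it exhibits $G/N$ as a Frobenius-type group of order $q(q-1)$ with a single nonlinear character), at the cost of a longer computation; the paper's route is shorter but leans on the rigidity of the quotient order from the first part plus the structural fact about minimal normal subgroups. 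One small caveat: your orthogonality bookkeeping uses positivity of $q-2$, so it needs $q\ge 3$; this is harmless, since for $q=2$ the statement is degenerate (no such quotient exists) and the lemma is only invoked for $q\ge 3$.
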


\begin{proof}

Let $G$ be a group with character degrees (\ref{baddegrees}), such that $N$ is a normal subgroup with $G/N$ nonabelian. Suppose that $G/N$ has no irreducible character of degree $q-1$. It must, then, have a character of degree $q$ since it is nonabelian. Then we have that \[r + sq^2 =  |G/N| \equiv 0 \mod{q}, \] \noindent where $1 < r \le q-1$ is the number of linear characters, and $s \le q-1$ is the number of irreducible characters of degree $q$. This is impossible, and so we conclude that there must be at least one irreducible character of degree $q-1$.

If there is no irreducible character of degree $q$, then we have \[r + (q-1)^2 =  |G/N| \equiv 0 \mod{(q-1)}, \] \noindent which forces $r=q-1$. We claim that this is the only valid possibility. If there is an irreducible character of degree $q$, then \[r + (q-1)^2 + sq^2 = r + q^2 - 2q + 1 +sq^2 = |G/N| \equiv 0 \mod{q},\] \noindent which also forces $r=q-1$. But then \[q-1 + (q-1)^2 + sq^2 = |G/N| \equiv 0 \mod{(q-1)} , \] \noindent which implies $s=q-1$, and hence $N=1$.

Note that $G/N$ is soluble, since its derived group is a $p$-group, due to the number of linear characters in (\ref{nabquotient}). Clearly $N$ is maximal such that $G/N$ is nonabelian. Finally, note that $(G/N)'$ is a minimal normal subgroup of $G/N$, since it is contained in every non-identity normal subgroup of $G/N$. (In fact, this means that it is the unique minimal normal subgroup, although we do not need this fact.) Indeed, let us consider an arbitrary one, $K/N$, where $N < K$. $(G/N)' \subseteq G/N$ is equivalent to $(G/N)/(K/N) \cong G/K$ being abelian, which must be the case, by the maximality of $N$.  \end{proof}

Let $G$ be an arbitrary finite group, with $H \le G$. If $\chi$ denotes a character of $G$, then we denote the restricted character on $H$ by $\chi \!\downarrow_H$. We will need to make use of the following well-known result several times. (See, e.g., \cite{C2}.)

\begin{cliff} Let $G$ be a group with $N \lhd G$, and let $\chi$ be an irreducible character of $G$. Then \[\chi\!\downarrow_N = e\sum_{i=1}^{r} \xi_i,\] \noindent where the $\xi_i$ are distinct irreducible characters of $N$ of the same degree, and $e$ and $r$ both divide $[G:N]$.

\end{cliff}

The following result, due to I. M. Isaacs (Theorem 12.15 of \cite{ISA}), implies that any group with character degrees (\ref{baddegrees}) must be soluble with derived length three. The proof is fairly lengthy in that it employs several other special results from that text, but it can in fact be proved more directly in the case of (\ref{baddegrees}) by exploiting the fact that the two non-trivial character degrees are coprime. 

\begin{thm} \label{ISAthm}

A group with at most three distinct character degrees is soluble with derived length at most three.

\end{thm}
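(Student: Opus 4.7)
The plan is to prove the two assertions---solvability and the derived-length bound---separately, using $c := |\operatorname{cd}(G)|$ as the induction parameter. The two base cases are classical: $c=1$ forces $G$ abelian, and $c=2$ is the Isaacs--Passman theorem, which gives $G$ solvable with $dl(G) \le 2$. The real content lies in the case $c=3$.

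For solvability, I would argue by minimal counterexample: assume $G$ is nonsolvable of minimal order with $|\operatorname{cd}(G)| \le 3$. Character degrees inflate through quotients, so $\operatorname{cd}(G/R) \subseteq \operatorname{cd}(G)$ for the solvable radical $R$, and minimality reduces us to $R=1$. Then the socle $N$ of $G$ is a direct product of isomorphic nonabelian simple groups. The key input is that every nonabelian finite simple group $S$ satisfies $|\operatorname{cd}(S)| \ge 4$ (verified directly for small cases such as $|\operatorname{cd}(A_5)|=4$, and in general via results on primes dividing character degrees available in Isaacs' text). Applying Clifford's theorem to $N \lhd G$---each $G$-orbit on $\operatorname{Irr}(N)$ contributing characters whose degrees are the underlying simple degrees multiplied by ramification indices and orbit lengths---then produces at least four distinct character degrees in $G$, a contradiction.

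Once $G$ is known to be solvable, I would bound $dl(G)$ by showing $G''$ is abelian, i.e.\ $G'$ is metabelian. Proceed via Clifford's theorem: each $\psi \in \operatorname{Irr}(G')$ is a constituent of $\chi\!\downarrow_{G'}$ for some $\chi \in \operatorname{Irr}(G)$, so divisibility relations hold among $\psi(1)$, $\chi(1)$, and $[G:G']$. Writing $\operatorname{cd}(G) = \{1, m, n\}$, and using Ito's theorem relating character degrees to Sylow structure together with the fact that $[G:G']$ equals the number of linear characters of $G$, a finite case analysis on the possible pairs $(m,n)$ shows $|\operatorname{cd}(G')| \le 2$. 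By the Isaacs--Passman case, $G'$ is metabelian, so $dl(G) \le 3$.

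The principal obstacle is the solvability step. The simple-group input $|\operatorname{cd}(S)| \ge 4$ is genuine and, although trivial post-classification by inspection of Atlas data, is treated in Isaacs' chapter 12 without appeal to the classification, requiring auxiliary theorems such as Thompson's theorem on character degrees divisible by a given prime and the Ito--Michler theorem. Marshaling these ingredients, rather than the Clifford arithmetic of the derived-length step, is the technically heavy part of the proof.
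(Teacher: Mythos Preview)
The paper does not actually prove this theorem: it cites it as Theorem~12.15 of Isaacs and remarks only that the proof is ``fairly lengthy'' and ``employs several other special results from that text,'' adding that for the particular degree set $\{1,q-1,q\}$ at hand (where $q-1$ and $q$ are coprime) a more direct argument exists. So there is no in-paper proof to compare against; your sketch has to be weighed on its own and against Isaacs' actual argument.

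Your two-stage architecture (solvability first, then $dl(G)\le 3$) and the $c\le 2$ base cases are correct, but the solvability step as written has a genuine gap and a circularity problem. The Clifford step does not do what you claim: $|\operatorname{cd}(N)|\ge 4$ for the socle $N$ does \emph{not} by itself force $|\operatorname{cd}(G)|\ge 4$, since the multipliers $e\cdot t$ attached to distinct values of $\theta(1)$ can coalesce them to a common degree upstairs. More seriously, the input ``$|\operatorname{cd}(S)|\ge 4$ for every nonabelian simple $S$'' is, in Isaacs' book, a \emph{corollary} of Theorem~12.15, not a lemma toward it; Chapter~12 contains no independent classification-free proof of that fact. (Thompson's theorem plus Burnside's $p^aq^b$ theorem yields only $|\operatorname{cd}(S)|\ge 3$.) Isaacs' proof of 12.15 works directly with the hypothesis $\operatorname{cd}(G)=\{1,m,n\}$, using Thompson's normal $p$-complement theorem together with the degree-pattern lemmas developed earlier in the chapter to locate a nontrivial solvable normal subgroup in a minimal counterexample, without ever passing through the structure of simple groups. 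Invoking It\^o--Michler in full generality would also smuggle in the classification, contrary to your framing. Your derived-length paragraph is closer in spirit to what Isaacs does, though the ``finite case analysis on $(m,n)$'' showing $|\operatorname{cd}(G')|\le 2$ is where most of the work hides and is not as short as you suggest.
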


We will shortly need the following (easily verifiable) fact.

\begin{lemma} \label{JL}  Let $H$ be a subgroup of a finite group $G$. Let $\chi$ be an irreducible character of $G$, and let $\psi_1, \dots, \psi_s$ be the irreducible characters of $H$. Then writing $\chi\!\downarrow_H = d_1\psi_1 + \cdots + d_s\psi_s$, the non-negative integers $d_i$ satisfy \[\sum_{i=1}^{s} d_i^2\,  \le \, [G : H].\]
\end{lemma}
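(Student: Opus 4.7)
The plan is to prove this via a direct orthogonality calculation. Since the irreducible characters $\psi_1, \ldots, \psi_s$ of $H$ form an orthonormal basis for the space of class functions on $H$, writing $\chi\!\downarrow_H = \sum_i d_i \psi_i$ immediately gives
\[\sum_{i=1}^s d_i^2 \; = \; \langle \chi\!\downarrow_H,\, \chi\!\downarrow_H \rangle_H \; = \; \frac{1}{|H|}\sum_{h \in H} |\chi(h)|^2.\]

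Next I would compare this to the global inner product on $G$. Since $\chi$ is irreducible, $\langle \chi, \chi\rangle_G = 1$, so $\sum_{g \in G}|\chi(g)|^2 = |G|$. Because each summand $|\chi(g)|^2$ is a non-negative real number, restricting the sum from $G$ down to the subset $H \subseteq G$ can only decrease it:
\[\sum_{h \in H}|\chi(h)|^2 \; \le \; \sum_{g \in G}|\chi(g)|^2 \; = \; |G|.\]

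Combining the two displays yields $|H|\sum_i d_i^2 \le |G|$, i.e.\ $\sum_i d_i^2 \le [G:H]$, as required. There is no real obstacle here: this is essentially just the orthogonality relations together with positivity of $|\chi(h)|^2$. (An alternative route via Frobenius reciprocity — noting that $\sum d_i^2 = \langle \chi, (\chi\!\downarrow_H)\!\uparrow^G\rangle_G$ and that this multiplicity is bounded by $\deg((\chi\!\downarrow_H)\!\uparrow^G)/\chi(1) = [G:H]$ — would work equally well, but the direct computation above is shorter.)
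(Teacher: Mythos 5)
Your proof is correct. The paper states this lemma only as an ``easily verifiable'' fact and supplies no proof of its own, so there is nothing to diverge from: your computation $\sum_i d_i^2 = \langle \chi\!\downarrow_H, \chi\!\downarrow_H\rangle_H = \frac{1}{|H|}\sum_{h\in H}|\chi(h)|^2 \le \frac{|G|}{|H|}\langle\chi,\chi\rangle_G = [G:H]$ is exactly the standard verification (it is the argument found in James and Liebeck \cite{JL}), and the Frobenius-reciprocity variant you sketch is an equally valid alternative.
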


\begin{lemma} \label{Gprimelem}

The character degrees of $G'$ are \[\underbrace{1, \ldots, 1}_{q},\ q.\] \noindent Furthermore, $G''$ is abelian of order $q+1$.

\end{lemma}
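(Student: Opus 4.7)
The plan is to determine $|G''|$ and the structure of $G'/G''$ by analyzing the nonabelian quotient $G/G''$, then to read off the character degrees of $G'$ via Clifford's theorem and a sum-of-squares count. First, note $|G/G'| = q-1$ (the number of linear characters of $G$), so $|G'| = q(q+1)$. By It\^o's theorem, any abelian normal subgroup $A \trianglelefteq G$ forces every irreducible character degree to divide $[G:A]$; applied with $A = G'$ and using $q \nmid q-1$, this rules out the degree-$q$ character, so $G'$ is nonabelian, i.e., $G'' \ne 1$. Combined with Theorem \ref{ISAthm}, which gives derived length at most $3$ (so $G''' = 1$), we already conclude that $G''$ is abelian.

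Since $G'/G''$ is nontrivial and abelian, $G/G''$ is nonabelian, so Lemma \ref{nabquotientL} applies and yields character degrees $\underbrace{1, \ldots, 1}_{q-1},\ q-1$ for $G/G''$, together with $G'/G'' = (G/G'')'$ being an elementary abelian $p$-group. Summing the squared degrees gives $|G/G''| = (q-1) + (q-1)^2 = q(q-1)$, hence $|G''| = q+1$ and $|G'/G''| = q$.

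It then remains to count characters of $G'$. Since $(G')' = G''$, the number of linear characters of $G'$ is $|G'/G''| = q$, contributing $q$ to $\sum_\xi \xi(1)^2 = |G'| = q + q^2$. To account for the remaining $q^2$, restrict the degree-$q$ irreducible character $\chi$ of $G$ to $G'$: by Clifford's theorem, $\chi\!\downarrow_{G'} = e \sum_{i=1}^r \xi_i$ with $er\, \xi_i(1) = q$ and $er \mid [G:G'] = q-1$; since $\gcd(q, q-1) = 1$, we get $er = 1$ and $\chi\!\downarrow_{G'}$ is an irreducible character of $G'$ of degree $q$. Its square $q^2$ exactly exhausts the remaining sum, so $G'$ has exactly one non-linear irreducible character, and it has degree $q$, as required. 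The main obstacle is pinning down $|G''| = q+1$; once Lemma \ref{nabquotientL} applies to $G/G''$ via the It\^o argument, the rest reduces to arithmetic bookkeeping.
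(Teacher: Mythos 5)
Your proof is correct, but it takes a genuinely different route from the paper's. The paper works entirely inside character theory of $G'$: it restricts a degree-$q$ character (getting an irreducible of degree $q$ by Clifford plus coprimality), then restricts the degree-$(q-1)$ character $\Phi$ and uses Lemma \ref{JL} to force $\sum_i \rho_i(1)^2 \ge q-1$, so that the sum-of-squares count $1+(q-1)+q^2 = |G'| = q(q+1)$ already exhausts $\mathrm{Irr}(G')$; it then argues the $\rho_i$ are linear (else $G'$ would be perfect) and only afterwards deduces $[G':G'']=q$ and $|G''|=q+1$. You instead pin down the orders first: It\^o's theorem (or even the elementary bound $\chi(1)\le[G:A]$ for $A\lhd G$ abelian) shows $G'$ is nonabelian, so $G''\ne 1$, and solubility from Theorem \ref{ISAthm} gives both $G''$ abelian and $G'\ne G''$ (the latter is asserted rather than stated explicitly in your write-up, but it is immediate: $G'=G''$ with $G'''=1$ would force $G''=1$); then Lemma \ref{nabquotientL} applied to $N=G''$ gives $|G/G''|=q(q-1)$, hence $|G''|=q+1$ and $|G'/G''|=q$, and a short count ($q$ linear characters plus the irreducible restriction $\chi\!\downarrow_{G'}$ of degree $q$ filling $q+q^2=|G'|$) finishes. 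This avoids Lemma \ref{JL} and the restriction of $\Phi$ altogether, at the cost of importing It\^o's theorem (standard, e.g. Isaacs 6.15, but not quoted in the paper), and it effectively anticipates the computation the paper defers to Lemma \ref{q+1}. Two small points: your claim that $er$ divides $[G:G']$ is stronger than the version of Clifford's theorem stated in the paper (which only gives that $e$ and $r$ each divide the index), but the weaker version suffices, since $er\,\xi_i(1)=q$ gives $er\mid q$ while $e,r\mid q-1$ and $\gcd(q,q-1)=1$; and, like the paper, your argument tacitly assumes $q\ge 3$ so that the degree-$(q-1)$ character is nonlinear and $[G:G']=q-1$.
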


\begin{proof}

Let $\chi$ be an irreducible character of $G$ of degree $q$. Then, by Clifford's theorem \[\chi\!\downarrow_{G'} = e\sum_{i=1}^{r} \xi_i,\] \noindent for some $e,r, \xi_i$. Moreover, $e$ and $r$ both divide $\chi (1) = q$ and $[G:G'] = q-\nolinebreak[4]1,$ so $e=\nolinebreak[4]r=1$. Hence $\chi\!\downarrow_{G'}$ is irreducible. Note that the restrictions of the characters of degree $q$ from $G$ to $G'$ must all agree because if $G'$ had more than one distinct irreducible character of degree $q$, then $\vert G' \vert \ge 2q^2$, which is impossible since $\vert G' \vert =\nolinebreak[4] q(q+1)$ by considering the number of linear characters of $G$.

Next, let $\Phi$ be the irreducible character of $G$ of degree $q-1$. Again, Clifford's theorem implies \begin{equation}
\Phi\!\downarrow_{G'} = d\sum_{i=1}^{s} \rho_i, \nonumber
\end{equation}\noindent for some $d,s,\rho_i$. We claim that \[\sum_{i=1}^{s} \rho_i^2 (1) \ge q-1.\] \noindent Indeed, first note that by Lemma \ref{JL}, $sd^2 \le [G:G'] = q-1$. Also, \[q-1 = \Phi\!\downarrow_{G'}(1) = d\sum_{i=1}^{s} \rho_i(1) = ds\rho_i(1);\] \noindent hence $d \le \rho_i(1)$ for each $i$. So \[\sum_{i=1}^{s} \rho_i^2 (1) \ge d\sum_{i=1}^{s} \rho_i (1) = \Phi\!\downarrow_{G'}(1) = q-1.\]

Finally, letting $\eta$ denote the principal character of $G'$, \[\eta(1) + \sum_{i=1}^{s} \rho_i^2 (1) + \chi\!\downarrow_{G'} (1) \ge 1 + (q-1) + q^2 = q(q-1) = \vert G' \vert,\] \noindent so we have all of the irreducible characters of $G'$.

If the $\rho_i$ were not linear, then $[G':G''] = 1$, and hence $G'$ would be perfect, which is not the case. So, actually, $G'$ has character degrees 

\[\underbrace{1, \ldots, 1}_{q},\ q,\]

\noindent as claimed. Finally, the number of linear characters is $q$, hence $[G':G''] = q$, and so $G''$ has order $q+1$, and is abelian since $G$ has derived length at most 3. \end{proof}

\begin{lemma} \label{q+1}

The irreducible character degrees of $G/G''$ are

\[\underbrace{1, \ldots, 1}_{q-1},\ q-1.\] \noindent Furthermore, $G''$ is a minimal normal subgroup of $G$. In particular, it is an elementary abelian $p$-group.

\end{lemma}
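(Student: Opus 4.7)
The plan is to identify which irreducible characters of $G$ descend to $G/G''$, thereby pinning down the character degrees there, and then to derive a cardinality contradiction from Lemma \ref{nabquotientL} to prove the minimality of $G''$.

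First I would rule out that any degree-$q$ irreducible character of $G$ descends. By Lemma \ref{Gprimelem}, any such $\chi$ restricts to the (essentially unique) irreducible degree-$q$ character of $G'$, so if $G'' \subseteq \ker \chi$ held, then $\chi\!\downarrow_{G'}$ would factor through the abelian group $G'/G''$ of order $q$ --- which admits only linear characters --- a contradiction.

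Next I would show the degree-$(q-1)$ character $\Phi$ does descend. The proof of Lemma \ref{Gprimelem} already produced $\Phi\!\downarrow_{G'} = d\sum_{i=1}^{s}\rho_i$ with each $\rho_i$ linear and hence trivial on $G''$; evaluating at $g \in G''$ gives $\Phi(g) = d\sum_i \rho_i(1) = \Phi(1)$, so $G'' \subseteq \ker \Phi$. Together with the $q-1$ linear characters of $G$ (automatically trivial on $G'' \subseteq G'$), this yields irreducible characters of $G/G''$ whose squared degrees sum to $(q-1) + (q-1)^2 = q(q-1) = |G/G''|$, so these are \emph{all} the irreducible characters of $G/G''$, proving the first claim.

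For minimality, suppose for contradiction $M \lhd G$ with $1 < M < G''$. Then $G/M$ admits the nonabelian group $G/G''$ as a quotient (nonabelian thanks to the degree-$(q-1)$ character just produced), so $G/M$ is itself nonabelian. Applying Lemma \ref{nabquotientL} with $N = M$ forces the character degrees of $G/M$ to be $\underbrace{1,\ldots,1}_{q-1},\,q-1$, whence $|G/M| = q(q-1) = |G/G''|$, contradicting $|M| < |G''|$. So $G''$ is minimal normal in $G$; being additionally abelian, it is characteristically simple, hence elementary abelian of some prime exponent (necessarily a prime divisor of $q+1$).

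The main subtlety is the first step: without the control on $G'$ from Lemma \ref{Gprimelem}, one cannot rule out a degree-$q$ character of $G$ having $G''$ in its kernel, which would disrupt the exact count. Once the kernel bookkeeping is done, the remaining steps are routine applications of Clifford theory and of Lemma \ref{nabquotientL}.
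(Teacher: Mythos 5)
Your proof is correct, but for the first claim you take a more explicit route than the paper. The paper gets the character degrees of $G/G''$ in one line by applying Lemma \ref{nabquotientL} with $N=G''$ (which is non-identity of order $q+1$ by Lemma \ref{Gprimelem}, and $G/G''$ is nonabelian since $G'/G''$ has order $q$); you instead identify $\mathrm{Irr}(G/G'')$ directly, showing $G''\subseteq\ker\Phi$ via the Clifford decomposition of $\Phi\!\downarrow_{G'}$ into linear characters and then closing the count $(q-1)+(q-1)^2=q(q-1)=|G/G''|$. Both are valid; the paper's version is shorter because Lemma \ref{nabquotientL} has already done the arithmetic, while yours has the merit of exhibiting the irreducibles of $G/G''$ explicitly as inflations of the linear characters and $\Phi$, and of not needing the case analysis inside Lemma \ref{nabquotientL} for this half. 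Note that your first step (no degree-$q$ character of $G$ contains $G''$ in its kernel) is actually redundant: once the sum of squares of the exhibited degrees equals $|G/G''|$, no further irreducible can descend, so it serves only as a consistency check. Your minimality argument is exactly the paper's: any $1\neq M\le G''$ normal in $G$ has $G/M$ nonabelian, so Lemma \ref{nabquotientL} fixes $|G/M|=q(q-1)$, forcing $M=G''$; and minimal normal plus abelian (equivalently, minimal normal in the soluble group $G$) gives elementary abelian — just phrase that last step as ``minimal normal $\Rightarrow$ characteristically simple, and characteristically simple abelian $\Rightarrow$ elementary abelian,'' since abelianness alone does not give characteristic simplicity.
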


\begin{proof}

The character degrees follow from Lemma \ref{nabquotientL}. Let $N$ be a normal subgroup of $G$ such that $1 \not= N \le G''$. Then $G/N$ must be nonabelian; hence $G''=N$ by Lemma \ref{nabquotientL}, since the order of such a quotient is fixed. \end{proof}

We now prove the main result for $q$ even.

\begin{prop} \label{even}

Let $q=2^h$ with $h > 1$. Then there exists no group with character degrees \[\underbrace{1, \ldots, 1}_{q-1},\ q-1,\  \underbrace{q, \ldots, q}_{q-1}.\]

\end{prop}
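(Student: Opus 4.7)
The plan is to decompose $N := G''$ as an $\F_r[P]$-module, where $P$ is a complement to $N$ in $G'$, and to extract a contradiction from the resulting lower bound on $\dim N$. From Lemmas \ref{Gprimelem} and \ref{q+1}, $N$ is a minimal normal subgroup of $G$ with $|N| = q+1$; since $q+1$ is odd for $q = 2^h$, $N$ is an elementary abelian $r$-group for some odd prime $r$, with $|N| = r^m$. Furthermore $G'/N$ is elementary abelian of order $q = 2^h$, so by Schur--Zassenhaus I may fix $P \le G'$ with $G' = N \rtimes P$ and $P \cong G'/N$ an elementary abelian $2$-group of order $q$.

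The $\F_r[P]$-module structure of $N$ is rigid: since $P$ is an elementary abelian $2$-group acting coprimely on $N$, Maschke's theorem applies, and every nonidentity element of $P$ is an involution acting on $N$ with eigenvalues $\pm 1$. These commuting involutions are simultaneously diagonalisable, yielding a decomposition $N = \bigoplus_{\epsilon \in \widehat{P}} N_\epsilon$ into common eigenspaces. The trivial eigenspace $N_0 = C_N(P)$ coincides with $Z(G') \cap N$ (since $N$ is abelian), and the latter is normal in $G$ because $Z(G')$ is characteristic in the normal subgroup $G'$. By minimality of $N$, either $N_0 = N$, in which case $P$ centralises $N$ and $G' = N \times P$ is abelian, contradicting $G'' \ne 1$; or $N_0 = 0$.

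It remains to analyse the $N_\epsilon$ for $\epsilon \ne 1$. The calculation in Lemma \ref{Gprimelem} shows that $\Phi\!\downarrow_{G'}$ is the sum of the $q-1$ distinct nontrivial linear characters of $G'/N$ in a single $G/G'$-orbit, so $G/G'$ acts transitively on $\widehat{P} \setminus \{1\}$. Because the decomposition $N = \bigoplus_\epsilon N_\epsilon$ is $G$-equivariant, the spaces $N_\epsilon$ with $\epsilon \ne 1$ are all $G$-conjugate, hence all have a common dimension $d$; since their sum equals $N \ne 0$, we must have $d \ge 1$. Therefore $\dim_{\F_r} N = (q-1)d \ge q-1$, giving $|N| = r^m \ge 3^{q-1}$, whereas $|N| = q+1 = 2^h+1$, and for $h \ge 2$ one easily checks $3^{q-1} \ge 27 > q+1$, yielding the required contradiction. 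The main subtlety I expect is verifying that $C_N(P)$ is normal in $G$ when $P$ is not; the identification $C_N(P) = Z(G') \cap N$ with $Z(G')$ characteristic in the normal subgroup $G'$ handles this cleanly.
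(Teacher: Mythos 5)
Your proof is correct, but it takes a genuinely different route from the paper. The paper's proof of Proposition \ref{even} shows that the kernel of the conjugation map $\gamma : G \to \mathrm{Aut}(G'')$ is exactly $G''$, so that $G/G''$ (of order $q(q-1)$) embeds in $GL_n(\hat{p})$ with $\hat{p}^n = q+1$; since $\hat{p}$ is coprime to $q(q-1)$ it can argue with ordinary character theory, and it uses the fact that every irreducible character is a constituent of some power of a faithful character to force the degree-$(q-1)$ irreducible into the faithful $n$-dimensional character, giving $n \ge q-1$ and the contradiction $\hat{p}^{q-1} \le q+1$. You instead work inside $G'$: you split off a complement $P \cong G'/G''$ (elementary abelian of order $q$, which is exactly the ``Moreover'' clause of Lemma \ref{nabquotientL} applied with $N = G''$ -- worth citing) by Schur--Zassenhaus, decompose $N = G''$ into $P$-weight spaces over $\F_r$, kill the trivial weight space via minimality of $G''$ (your identification $C_N(P) = Z(G')\cap N$ handles normality neatly), and use Clifford transitivity of the constituents of $\Phi\!\downarrow_{G'}$ to make the $q-1$ nontrivial weight spaces $G$-conjugate, hence equidimensional and all nonzero, forcing $\dim_{\F_r} N \ge q-1$ and the same numerical contradiction $r^{q-1} \le q+1$. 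What your route buys is the avoidance of the kernel computation and of the modular-to-ordinary comparison; what it costs is reliance on the orbit-transitivity part of Clifford's theorem, which is standard but not contained in the version quoted in the paper. Two small repairs: (i) the $G$-stability of your decomposition needs a sentence, since $P$ is not normal in $G$ -- the clean statement is that, $N$ being abelian, the conjugation action of $G'$ on $N$ factors through $G'/N$, so your $P$-eigenspace decomposition coincides with the decomposition under the normal subgroup $G'$, which $G$ visibly permutes according to its action on the characters of $G'/N$; and (ii) the final inequality chain ``$3^{q-1} \ge 27 > q+1$'' fails as written once $q \ge 32$; replace it by the (trivial) bound $3^{q-1} > q+1$ for all $q \ge 4$.
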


\begin{proof}

Consider the conjugation map \[\gamma : G \longrightarrow Aut(G'').\] \noindent We start off by showing that $\ker\gamma = G''$, thus exhibiting a faithful representation of $G/G''$, since $Aut(G'') \cong GL_n(\hat{p})$ for some integer $n$ and prime $\hat{p}$, such that $\hat{p}^n=q+1$ by virtue of Lemmas \ref{Gprimelem} and \ref{q+1}. Let $K=\ker\gamma$.

First, we claim that $G/K$ cannot be abelian. Indeed, if it was, then $G' \le K$, which would mean that the conjugation action of $G'$ on $G''$ fixes each element of $G''$. The number of conjugacy classes of $G'$ would then have to be strictly greater than $\vert G'' \vert = q+1$. But the number of conjugacy classes of $G'$ is equal to the number of irreducible characters of $G'$, which we know to be $q+1$ by Lemma \ref{Gprimelem}.

To see that $K=G''$, note that \[G/K \cong (G/G'')/(K/G'');\] \noindent so the irreducible characters of $G/K$ correspond to those of $G/G''$ which have $K/G''$ in their kernel. It follows that since $G/K$ is nonabelian, it must have a non-linear irreducible character of degree $q-1$, hence $q-1$ divides $\vert G/K \vert$. Also $\vert G/K \vert \ge\nolinebreak[4] (q-\nolinebreak[4]1)^2 +\nolinebreak[4] 1$ by considering $\vert G/K \vert$ as the sum of the squares of its irreducible character degrees. It follows that $\vert G/K \vert = q(q-1)$. But, since $G''$ is abelian, $G'' \le K$, thus $K=G''$.

By embedding $GL_n(\hat{p})$ into $GL_n(\overline{\F_{\hat{p}}})$ we induce a faithful representation \[\gamma : G/G'' \hookrightarrow GL_n(\overline{\F_{\hat{p}}}),\] \noindent and provided $\hat{p}$ does not divide $\vert G/G'' \vert = q(q-1)$, the character degrees over $\overline{\F_{\hat{p}}}$ are the same as those over $k$.  But $\hat{p}^n = q+1$, which means $\hat{p}$ does not divide $q(q-1)$.

Let $\Phi$ be the character of $\gamma$, which is faithful because $\gamma$ is faithful. Then every irreducible character of $G/G''$ is a constituent of $\Phi^t$ for some $t$ (we mentioned this fact in the proof of Proposition \ref{ISAthm}). If $\Phi$ was a sum of linear characters only, then so would each power of it; thus the degree-($q-1$) irreducible character would never show up, which is a contradiction. It follows that the degree-($q-1$) irreducible character is a constituent of $\Phi$; thus $n \ge q-1$. But this is impossible since then $q+1=\hat{p}^n \ge \hat{p}^{q-1}$, while $q\ge 4$. \end{proof}

Note that the symmetric group $S_3$ has character degrees $1,1,2$ which corresponds to the case $q=p=2$. We now deal with $q$ odd.

\begin{prop} \label{odd}

Let $q=p^h$ be an odd prime power with $h \ge 1$. Then there exists no group with character degrees

\[\underbrace{1, \ldots, 1}_{q-1},\ q-1,\  \underbrace{q, \ldots, q}_{q-1},\]

\noindent except when $q=p=3$.

\end{prop}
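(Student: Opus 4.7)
Plan: I would assume for contradiction that a group $G$ of order $q(q-1)(q+1)$ exists with the character degrees \eqref{baddegrees}. By Lemmas \ref{Gprimelem} and \ref{q+1}, $G''$ is elementary abelian of order $q+1$, so $q+1$ must itself be a prime power; since $q$ is odd, $q+1$ is even, forcing $\hat{p}=2$ and $q+1=2^n$ for some $n\ge 2$. (This alone eliminates every odd $q$ for which $q+1$ is not a power of $2$, e.g.\ $q=5,9,11,\ldots$.) The kernel analysis from the proof of Proposition \ref{even} uses only character-degree arithmetic and so carries over verbatim, producing a faithful embedding $\gamma : G/G'' \hookrightarrow \mathrm{Aut}(G'') \cong GL_n(\mathbb{F}_2)$.

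From here the even-case argument breaks down because $\hat{p}=2$ divides $|G/G''|=q(q-1)$, so the passage to modular characters no longer preserves degree data. My replacement is to pin down $P := G'/G''$ explicitly inside $GL_n(\mathbb{F}_2)$. Apply Clifford's theorem to the unique degree-$q$ character $\chi$ of $G'$ restricted to the abelian normal subgroup $G''$: writing $\chi\!\downarrow_{G''}=e\sum_{i=1}^{r}\xi_i$ with $er=q$, a Frobenius-reciprocity bound on the multiplicity of $\chi$ in $\mathrm{Ind}_{G''}^{G'}\xi_1$ (itself a character of degree $q$) forces $e=1$. Hence $\chi\!\downarrow_{G''}=\xi_1+\cdots+\xi_q$ with $\{\xi_1,\ldots,\xi_q\}$ a single $G'$-orbit exhausting $\widehat{G''}\setminus\{1\}$, so $P$ acts transitively on $\widehat{G''}\setminus\{1\}$. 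By Burnside's orbit-counting lemma (using $|V^g|=|(V^*)^g|$ for every $g$), $P$ also acts transitively, hence regularly, on $V\setminus\{0\}$, where $V:=G''\cong \mathbb{F}_2^n$.

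The regular action makes $V$ an irreducible $\mathbb{F}_2[P]$-module, so by Schur's lemma and Wedderburn's little theorem $\mathrm{End}_P(V)$ is a field $\mathbb{F}_{2^k}$ with $k \mid n$. Because $P$ is abelian, $P \subseteq \mathrm{End}_P(V)^\times = \mathbb{F}_{2^k}^\times$, and matching $|P|=2^n-1$ with $2^k-1$ forces $k=n$ and $P=\mathbb{F}_{2^n}^\times$ cyclic. Combined with the elementary-abelian structure of $P$ furnished by Lemma \ref{nabquotientL} applied to $N=G''$, $P$ must have prime order, so $h=1$ and $q=p$ is a Mersenne prime.

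To finish, observe that $P \trianglelefteq G/G''$, so $G/G''$ lies inside $N_{GL_n(\mathbb{F}_2)}(P)=\Gamma L_1(\mathbb{F}_{2^n})$, a group of order $n(2^n-1)=nq$. Comparing orders yields $q(q-1)\mid nq$, i.e., $q-1\le n=\log_2(q+1)$; this bound fails for every $q \ge 5$ and is an equality exactly at $q=3$ (corresponding to $S_4$, the stated exception). The main obstacle is the third paragraph, the identification of $P$ with a Singer cycle in $GL_n(\mathbb{F}_2)$: this is where a genuine replacement for the even-case modular argument is needed, and the leverage comes from the simultaneous abelianness of $P$ and regularity of its action on $V\setminus\{0\}$.
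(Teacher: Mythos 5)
Your argument is correct, and although it ends at the same contradiction as the paper --- $G/G''$ embeds in $GL_n(2)$, normalizes the Singer cycle $G'/G''$, and $|G/G''|=q(q-1)$ cannot fit inside that normalizer --- you reach the Singer-cycle identification by a genuinely different route. The paper first eliminates $h>1$ by elementary number theory: $p^h+1=2^n$ forces $h=1$ via the factorization $2^n=(p+1)(p^{h-1}-p^{h-2}+\cdots+1)$, whose second factor is odd; with $G'/G''$ then of prime order $2^n-1$ the Singer identification is immediate, and the normalizer bound is obtained by an explicit matrix computation in $GL_n(2^n)$ showing $|N_{GL_n(2)}(S)|$ divides $np$, against the lower bound $p(p-1)$. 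You instead identify $P=G'/G''$ with $\mathbb{F}_{2^n}^{\times}$ directly: Clifford plus Frobenius reciprocity applied to the unique degree-$q$ character of $G'$ gives transitivity of $P$ on the nontrivial characters of $G''$, Brauer's permutation lemma (your $|V^g|=|(V^*)^g|$ observation) converts this into a regular action on $G''\setminus\{1\}$, and Schur--Wedderburn together with the abelianness of $P$ forces $P=\mathbb{F}_{2^n}^{\times}$; the conclusion $h=1$ then falls out (cyclic plus elementary abelian via Lemma \ref{nabquotientL}) but is not even needed for the final count $q(q-1)\le nq$. The trade-off is clear: you avoid the number theory and never need $q$ prime at the identification step, at the cost of heavier representation-theoretic input and of citing the standard fact $N_{GL_n(2)}(\mathbb{F}_{2^n}^{\times})=\Gamma L_1(2^n)$ of order $n(2^n-1)$ rather than proving it --- a fact the paper itself acknowledges as known in its closing parenthetical, so the citation is harmless. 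Two small points to make explicit in a final write-up: the Clifford constituents $\xi_i$ are nontrivial (immediate, since $e=1$, $r=q$ and the trivial character is a singleton orbit, or since $G''\not\le\ker\chi$ because $\chi$ is nonlinear and $G'/G''$ is abelian), and the faithfulness of the $P$-action on $G''$, which, as you indicate, is exactly the kernel computation $\ker\gamma=G''$ imported from the proof of Proposition \ref{even}.
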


\begin{proof}

By Lemmas \ref{q+1}  and \ref{Gprimelem}, $G''$ is elementary abelian of order $q+1$. Since $p$ is odd, it follows that $p^h + 1 = 2^n$. We claim that this forces $h=1$. First of all, $h$ must be odd. Indeed, if it were even, then $p^h$ would be a square and hence $p^h \equiv 1 \bmod{4}$, thus $2^n \equiv 2 \bmod{4}$. But then $n=1$,  which is impossible. To see that $h=1$ write \[2^n = p^h + 1 = (p+1)(p^{h-1} - p^{h-2} + p^{h-3} - \cdots - p + 1).\] \noindent The factor on the right comprises an odd number of odd summands and is therefore odd. It divides $2^n$ though, and so must be equal to 1. It follows that $h = 1$.

This leaves us with the case $q=p=2^n -1$, i.e., a Mersenne prime, which we deal with in a different manner. We can identify $G/G''$ with a subgroup of $GL_n(2)$ by the proof of Proposition \ref{even}. Since $(G/G'')' = G'/G''$ has order $p = 2^n -1$, it corresponds to a Singer cycle in $GL_n(2)$. In particular, the normalizer of a Singer cycle in $GL_n(2)$ contains $G/G''$, and thus has order at least $p(p-1)$. We will now prove that this is a contradiction.

Let $S=G'/G''$, $G=GL_n(2)$ and $G_2=GL_n(2^n)$. By diagonalizing a generator we can embed $G$ into $G_2$ such that $S=\langle y \rangle$, where \[ y= \left( \begin{array}{ccccc}
\xi & 0 & 0 & \cdots & 0 \\
0 & \xi^{r} & 0 & \cdots & 0 \\
0 & 0 & \xi^{r^{n}} & \cdots & 0 \\
\vdots & \vdots & \vdots & \ddots & \vdots \\
0 & 0 & 0 & \cdots & \xi^{r^{(n-1)}}  \end{array} \right) \] \noindent for some primitive element $\xi \in \F_{2^n}$. (The natural embedding followed by conjugation by a suitable element of $G_2$.) Now suppose that $n  \in N_{G_2}(S)$. Note that this is equivalent to $ny^kn^{-1} \in \langle y \rangle$ for all $k$, which is equivalent to $nyn^{-1} \in \langle y \rangle$. So in order to describe $N_{G_2}(S)$ it is sufficient to describe those $n = (n_{ij}) \in G_2$ such that $ny = y^tn$ for some $t$. Note the identities \begin{eqnarray}
(n_{ij})(y_{ij}) &=& (n_{ij}\xi^{r^{j-1}}) \nonumber \\ (y_{ij})^t(n_{ij}) &=& (n_{ij}\xi^{tr^{i-1}}).\nonumber\end{eqnarray} \noindent Assume these agree for some fixed $t$. If $n_{ij}$ is non-zero, then $\xi^{r^{j-1}} = \xi^{tr^{i-1}}$ which forces $t=r^{j-i}$. So $s=i-j$ is fixed for the non-zero entries of $n$. Conversely, any matrix, $n$, which has non-zero entries precisely in the $(ij)^{th}$ positions, where $i-j =s$ will do the trick. Now let us vary $t$. We see that $t$ must be a power of $r$, but that we are free to vary it subject to this restriction, and in so doing we vary $s$. It follows that $N_{G_2}(S)$ consists of all elements of the form \[ \left( \begin{array}{cccccccc}
			   &		  	  &         &            &               a_{1,s} &           &           &            \\
	  		 &  	  	 	&      &&   &  \ddots           &   &            \\
   		 	 &  	 	  	&        && &           &   a_{n,s-n}         &  \\
a_{s-1,1}&      		&       &&  &           &           &            \\
         &\ddots &         && &           &           &          \\
         & & a_{n,s-n}        && &           &           &    \end{array} \right); \] \noindent hence $|N_{G_2}(S)| = n(2^n-1)^n = np^n$. (The blank space indicates zeros.) Since $N_{G}(S) \le \nolinebreak[4] N_{G_2}(S)$, $|N_{G}(S)|$ divides $np^n$. Also $N_{G}(S) \le G$ so $|N_{G}(S)|$ divides 
         
         \[|G|\, =\, p(p-1)(p-3)(p-7)\cdots 3.\] \noindent It follows that $|N_{G}(S)|$ divides $pn$. Since $n < 2^n -2 = p-1$ for $n \ge 3$, \[|N_{G}(S)|\, \le\, pn\, <\, p(p-1),\] \noindent which is a contradiction. (In fact it is known that, in general, the normalizer of a Singer cycle $S$ is of the form $S \rtimes \Z_n$.)

Finally, recall that the symmetric group $S_4$ has character degrees $1,1,2,3,3$ which corresponds to the case $q=\nolinebreak p=\nolinebreak 3$.\end{proof}

Theorem \ref{main} now follows from Propositions \ref{even} and \ref{odd}.

\begin{cor} \label{maincor}

Any Hopf algebra $H'$ obtained from $H$ or $H^{*}$ by a comultiplication twist is non-trivial.

\end{cor}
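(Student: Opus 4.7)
The plan is to exploit the fact that a comultiplication twist conjugates $\Delta$ by some $\Omega\in H\otimes H$ and leaves multiplication untouched. Therefore any twist $H'$ of $H$ or of $H^*$ is isomorphic to $H$ or to $H^*$, respectively, as a $k$-algebra. The corollary then reduces to the assertion that, for $q\notin\{2,3\}$, neither $H=H_q$ nor $H^*=H_q^*$ has the algebra structure of a group algebra.

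For $H$, this is exactly Theorem~\ref{main}. For $H^*$, I would re-apply the bismash product machinery: $H_q^*\cong k^S \# kC$ is the bismash product associated with the reversed factorized group $(PGL_2(q), S, C)$, so the formula~(\ref{chardims}), applied to the right action of $C$ on $S$ defined by $sc = c'\cdot s^{[c]}$, gives its simple module dimensions from the $C$-orbits on $S$ and their stabilizers. Since any two distinct Singer cycles in $PGL_2(q)$ intersect trivially (because $C=C_G(c)$ for every $c\in C\setminus\{1\}$), each stabilizer $C\cap s^{-1}Cs$ equals $C$ when $s\in N_G(C)$ and is trivial otherwise. An orbit-counting argument using $|N_G(C)|=2(q+1)$ and $|C\backslash G/C|=q$ then yields exactly $2$ fixed points and $q-2$ orbits of size $q+1$, from which the simple module dimensions of $H_q^*$ work out to be
\[\underbrace{1,\ldots,1}_{2(q+1)},\ \underbrace{q+1,\ldots,q+1}_{q-2}.\]

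The main obstacle will then be the character-theoretic step: showing that no group of order $q(q-1)(q+1)$ has precisely these irreducible character degrees. This step parallels Section~\ref{proof} but with a key difference\,---\,whereas there the argument exploits the coprimality of the two distinct non-linear degrees $q-1$ and $q$, here there is only a single non-linear degree $q+1$, so a fresh numerical analysis is required. I would proceed by analogy: Clifford's theorem applied to a degree-$(q+1)$ character pins down the structure of $G'$ (of order $q(q-1)/2$), after which chasing the derived series $G\geq G'\geq G''$ and producing a faithful representation of $G/G''$ into a small general linear group should, as in Propositions~\ref{even} and \ref{odd}, yield a numerical contradiction.
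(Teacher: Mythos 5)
Your opening reduction is exactly the paper's (essentially unstated) argument: the paper treats the corollary as an immediate consequence of Theorem \ref{main}, using only the observation from Section \ref{intro} that a comultiplication twist leaves the algebra structure untouched --- so a twist of $H$ that were a group algebra as a Hopf algebra would make $H$ a group algebra as an algebra (contradicting Theorem \ref{main}), commutativity being excluded by Theorem \ref{simpmods}, and a commutative twist of $H^{*}$ would force $H$ to be cocommutative, hence again a group algebra even as an algebra. Where you go beyond the paper is in noticing, correctly, that the remaining possibility --- a \emph{cocommutative} twist of $H^{*}$ --- is not settled by Theorem \ref{main} alone: it requires knowing that the algebra $H^{*}\cong k^{S}\# kC$ is itself not a group algebra, which the paper never computes. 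Your orbit count for that algebra is essentially right: the right action of $C$ on $S$ is the action on $C\backslash G$, the fixed points are the two cosets lying in $N_{G}(C)$, the remaining $q-2$ orbits are regular, and the simple module dimensions come out as $1$ (with multiplicity $2(q+1)$) and $q+1$ (with multiplicity $q-2$). (One small slip: your justification ``$C=C_{G}(c)$ for every $c\neq 1$'' fails at the involution of $C$ when $q$ is odd, where $C_{G}(c)$ is dihedral of order $2(q+1)$; the triviality of $C\cap C^{s}$ for $s\notin N_{G}(C)$ needs the extra remark that this dihedral group has a unique cyclic subgroup of order $q+1$.)

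The genuine gap is the step you yourself flag as ``the main obstacle'': you never prove that no group of order $q(q-1)(q+1)$ has character degrees $1^{2(q+1)},\,(q+1)^{q-2}$, and the proposed ``analogy'' with Section \ref{proof} does not go through. The whole chain Lemma \ref{nabquotientL} -- Lemma \ref{Gprimelem} -- Lemma \ref{q+1} -- Propositions \ref{even} and \ref{odd} rests on having two \emph{coprime} nonlinear degrees $q-1$ and $q$: this is what pins down $|G'|=q(q+1)$, makes $G''$ elementary abelian of order $q+1$, and sets up the faithful representation into $GL_n(\hat p)$. With a single nonlinear degree $q+1$ and $[G:G']=2(q+1)$ none of these lemmas applies, and a group with exactly two character degrees is a much looser object (by Isaacs one only gets metabelian-type structure, e.g.\ an abelian normal subgroup of index $q+1$, and one must then exclude an almost-fixed-point-free action of a group of order $q+1$ on an abelian group of order $q(q-1)$ with exactly two fixed characters). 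That analysis is plausible but nontrivial, and it is not supplied. So as written your argument proves the corollary for twists of $H$, but leaves open precisely the case --- a cocommutative twist of $H^{*}$ --- for which you introduced the new computation.
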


\section{Some bismash products that are group algebras}

Given a factorized group $G=LF$ with $L\cap F = 1$, there are clearly two (in general, distinct) ways of creating a bismash product, depending on the order in which one takes the factors. (In fact, they are dual to one another.) Fix $H= k^L \# k F$. We consider the algebra structure of $H$ first in the case when $F$ is normal, and then in the case where $G$ is a Frobenius group with Frobenius kernel $L$. (A good reference for Frobenius groups is \cite{Go}. The definition appears on page 37, while the results of Chapter 10 give enough background for the material in this article, as well as other characterizations of Frobenius groups.) In particular, we show explicitly that it does have the structure of a group algebra in each of these cases. The following elementary lemma shows what happens if $F$ is normal.

\begin{lemma}

Let $(G,H,N)$ be a factorized group with $G = N \rtimes H$. Then the bismash product $k^H \# k N$ is isomorphic $($as a $k$-algebra$)$ to the group algebra $k(\Z_{|H|} \times\nolinebreak N)$.

\end{lemma}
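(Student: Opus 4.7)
The plan is to directly compute the simple module dimensions of $k^H \# kN$ using Lemma 2.2 and compare them with those of $k(\Z_{|H|} \times N)$; since both algebras are semisimple over an algebraically closed field of characteristic zero, Wedderburn's theorem will do the rest.

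First I would unpack the action $H \times N \to H$, $n : h \mapsto h^{[n]}$ determined by the defining rule $hn = (\,^{[h]}n)(h^{[n]})$. Because $N$ is normal in $G = N \rtimes H$, one has the trivial rewriting $hn = (hnh^{-1})h$ with $hnh^{-1} \in N$ and $h \in H$. By uniqueness of factorization in $G = HN$ (note $H \cap N = 1$), this identifies $\,^{[h]}n = hnh^{-1}$ and, crucially, $h^{[n]} = h$. In other words, the right action of $F = N$ on $L = H$ used in Lemma 2.2 is \emph{trivial}.

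It follows that every element of $H$ forms its own $N$-orbit and $\operatorname{Stab}_N(h) = N$ for all $h \in H$. Applying Lemma 2.2 with the representatives $l_1, \dots, l_{|H|}$ being the elements of $H$, the simple module dimensions of $k^H \# kN$ are the irreducible character degrees of $N$, each listed $|H|$ times. On the other hand, since $\Z_{|H|}$ is abelian with $|H|$ linear characters, the irreducible representations of $\Z_{|H|} \times N$ are exactly the tensor products of a linear character of $\Z_{|H|}$ with an irreducible representation of $N$; hence the simple module dimensions of $k(\Z_{|H|} \times N)$ are also the irreducible character degrees of $N$, each appearing $|H|$ times.

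The two algebras therefore have the same multiset of simple module dimensions. Since $k^H \# kN$ is semisimple by Lemma 2.1 and $k(\Z_{|H|} \times N)$ is semisimple because $k$ has characteristic zero, the Artin--Wedderburn decomposition shows both are isomorphic to $\bigoplus_i M_{d_i}(k)$ with the same multiset $\{d_i\}$, and the desired $k$-algebra isomorphism follows. No real obstacle arises: the entire content is the normality-driven observation that the right action of $N$ on $H$ is trivial.
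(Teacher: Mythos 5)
Your proposal is correct and follows essentially the same route as the paper: since $N$ is normal, the defining rule forces $h^{[n]}=h$, so the right action of $N$ on $H$ is trivial, the orbits are singletons with stabilizer $N$, and the simple module dimensions are $|H|$ copies of the character degrees of $N$, matching those of $\Z_{|H|}\times N$. Your explicit appeal to Wedderburn just spells out what the paper leaves implicit (that matching simple module dimensions determine the semisimple algebra up to isomorphism).
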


\begin{proof}

Consider the right action of $N$ on $H$, which determines the module structure of $k^H \# k N$. Recall that this is given by $n : h \mapsto h^{[n]}$, defined by the rule $hn = n'h^{[n]}$ for some $n' \in N$. Since $N \lhd G$, we must have $h^{[n]} = h$ for all $n\in N$, $h\in H$. It follows that each point in $H$ is an orbit and each point stabilizer in $N$ is the whole of $N$. It follows from the above that the dimensions of the simple modules of $k^H \# k N$ consist of $|H|$ copies of the irreducible character degrees of $N$. The irreducible character degrees of $\Z_{|H|} \times\nolinebreak N$ agree with this.
\end{proof}

We now consider what happens if the normal factor forms the underlying set for the action. Let ($G,N,H$) be a factorized group with $G = N \rtimes H$ and consider the $k$-algebra structure of $k^N \# k H$. This is determined by the action $h : n \mapsto n^{[h]}$, defined by the rule $nh = h'n^{[h]}$ for some $h' \in H$. But since $N \lhd G$, $n^{[h]} = n^h$ and so this is just the conjugation action of $H$ on $N$. So, for a semidirect product $G=N \rtimes_{\varphi} H$, the associated homomorphism $\varphi : H \rightarrow Aut N$ which determines the group structure of $G$, given $N$ and $H$, is precisely the map which determines the $k$-algebra structure of the bismash product $k^N \# k H$.  (In essence, the advantage we have over arbitrary bismash products is that the image of this map is contained in  $Aut N$, rather than $S\!ym N$.) It should be possible, therefore, to make general statements about such bismash products by considering certain classes of semidirect products. One such class is where $\varphi$ is an injection onto a {\em fixed-point-free} subgroup of $Aut N$, i.e., one where only the identity element of $H$ fixes a point other than $1 \in N$. This is one characterization of a Frobenius group.

\begin{thm} \label{frobthm}

Let $(G,N,H)$ be a factorized group, such that $G$ is a Frobenius group with Frobenius kernel $N$ and Frobenius complement $H$. Consider the lower central series of $N$: \[N = \gamma_1(N) \rhd \gamma_2(N) \rhd \cdots \rhd \gamma_n(N) = 1;\]\noindent and let \begin{equation} \label{N*} N^{*} = \gamma_1(N) / \gamma_2(N) \times \gamma_2(N) / \gamma_3(N) \times \cdots \times \gamma_{n-1}(N) / \gamma_n(N).\end{equation} Then the bismash product $k^N \# k H$ is isomorphic (as a $k$-algebra) to the group algebra of a semidirect product $N^{*} \rtimes H$, which is also a Frobenius group, with Frobenius kernel $N^{*}$.
\end{thm}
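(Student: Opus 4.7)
\textbf{Proof plan for Theorem \ref{frobthm}.} The strategy is to compute the multiset of simple-module dimensions on each side via the orbit–stabilizer formula of Section \ref{bismash}, and show they coincide; since both algebras are semisimple, this forces a $k$-algebra isomorphism.

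First, I compute the simple-module dimensions of $k^N \# kH$. As noted just before the theorem, the right action $n \mapsto n^{[h]}$ governing this algebra structure is simply the conjugation action of $H$ on $N$. Because $G$ is Frobenius with complement $H$, this action is fixed-point-free on $N \setminus \{1\}$, so the $H$-orbits on $N$ consist of $\{1\}$ (with stabilizer $H$) together with $(|N|-1)/|H|$ free orbits (each with trivial stabilizer). The dimension formula of Section \ref{bismash} then yields the multiset
\[
\bigl\{\,d_1, \ldots, d_s,\; \underbrace{|H|, \ldots, |H|}_{(|N|-1)/|H|\ \text{copies}}\,\bigr\},
\]
where $d_1, \ldots, d_s$ are the irreducible character degrees of $H$.

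Next, I verify that $N^{*} \rtimes H$ (with the diagonal action of $H$) is itself Frobenius with kernel $N^{*}$, i.e., that the diagonal $H$-action on $N^{*}$ is fixed-point-free. The lower central series is characteristic, hence $H$-invariant. In any Frobenius group the orders $|H|$ and $|N|$ are coprime, so the standard coprime-action identity $C_{N/M}(h) = C_{N}(h)M/M$ applied to $M = \gamma_{i+1}(N)$ gives $C_{N/\gamma_{i+1}(N)}(h) = 1$ for every $h \ne 1$; restricting to the $H$-invariant section $\gamma_i(N)/\gamma_{i+1}(N)$ shows that $H$ acts fixed-point-freely on each factor of (\ref{N*}), and hence on their product $N^{*}$. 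By the standard characterization of Frobenius groups, $N^{*} \rtimes H$ is Frobenius with kernel $N^{*}$.

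Finally, I enumerate the irreducible character degrees of $N^{*} \rtimes H$. Since $N^{*}$ is a product of abelian groups it is abelian, so every element of $\mathrm{Irr}(N^{*})$ is linear. For a Frobenius group with abelian kernel, classical character theory gives exactly two families of irreducible characters: the lifts of $\mathrm{Irr}(H)$ along $N^{*} \rtimes H \twoheadrightarrow H$, contributing the degrees $d_1, \ldots, d_s$; and the induced characters $\chi^{N^{*} \rtimes H}$, one per non-trivial $H$-orbit on $\mathrm{Irr}(N^{*})$, each of degree $|H|$. The fixed-point-free action of $H$ on $N^{*}$ induces a fixed-point-free action on $\mathrm{Irr}(N^{*}) \setminus \{\mathbf{1}\}$ (for $h \ne 1$, $n \mapsto [h,n]$ is injective, hence surjective on the abelian group $N^{*}$, so any fixed $\chi$ is forced to be trivial), so the non-trivial orbits have size $|H|$ and there are exactly $(|N^{*}|-1)/|H| = (|N|-1)/|H|$ of them. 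This matches the multiset from the first paragraph, completing the proof.

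The main obstacle is the middle step: ensuring that fixed-point-freeness descends from $N$ to each central factor $\gamma_i(N)/\gamma_{i+1}(N)$. This rests on the coprime-action lemma, which in turn uses the classical fact that $(|H|, |N|) = 1$ for any Frobenius group. Everything else is a routine application of the dimension formula of Section \ref{bismash} and the well-understood character theory of Frobenius groups with abelian kernel.
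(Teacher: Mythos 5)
Your proposal is correct and follows essentially the same route as the paper: compute the simple-module dimensions of $k^N \# kH$ from the fixed-point-free conjugation orbits, replace $N$ by the abelian group $N^{*}$ built from the lower central factors, check that $H$ still acts fixed-point-freely so that $N^{*}\rtimes H$ is Frobenius, and match character degrees. The only difference is one of detail: where the paper merely asserts that the induced action on each factor $\gamma_i(N)/\gamma_{i+1}(N)$ remains fixed-point-free, you justify it via the coprime-action identity (using $(|H|,|N|)=1$), and you make the count of degree-$|H|$ characters explicit — both valid and welcome elaborations rather than a different method.
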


\begin{proof}

First of all let us calculate the dimensions of the simple modules of $k^N \# k H$. By the remark above, the right action of $H$ on $N$ which determines the module structure of $k^N \# k H$ is given by the map $\varphi : H \rightarrow Aut N$ associated with the semidirect product $N \rtimes_{\varphi} H$. As always, $1$ is an orbit. It follows that $1 \in H$ is the only element that fixes a point in $N^{\#}= N - 1$, and so all point-stabilizers of elements of $N^{\#}$ are trivial. So for any $x \in N^{\#}$, we have $|x^H| = |H_x||x^H| = |H|$. Thus, the dimensions of the simple modules of $k^N \# k H$ are just the irreducible character degrees of $H$, together with $\underbrace{|H|, \ldots, |H|}_{r}$ for some $r\ge 1$.

Now let us consider the irreducible character degrees of $G$. They comprise the character degrees of $H$ together with the degrees of the non-principal characters induced from $N$ to $G$. It is obvious then that if $N$ is abelian, the irreducible character degrees of $G$ will match the dimensions of the simple modules of $k^N \# k H$. But this will certainly not be the case if $N$ is not abelian. What we would like to do is to swap $N$ with an abelian group $N^{*}$, of the same order, which shares some properties with $N$ such that we can define a new homomorphism $\varphi^{*} : H \hookrightarrow Aut N^{*}$ whose image is fixed-point-free. Then $N^{*} \rtimes_{\varphi^{*}} H$ would have the desired irreducible character degrees.

We claim that $N^{*}$, as in (\ref{N*}), is suitable. We shall prove this in several stages. First, note that if $f$ is fixed-point-free on $N$, then it must be fixed-point-free on any characteristic subgroup of $N$. Now recall that each term in the lower central series is a characteristic subgroup. It follows that there is a natural automorphism induced on each of the factors $\gamma_i(N) / \gamma_{i+1}(N)$; moreover it is also fixed-point-free. It is straightforward to extend this to the direct product (\ref{N*}). Finally, to see that $N^{*}$ is abelian, note that each $\gamma_i(N) / \gamma_{i+1}(N)$ is a subgroup of $Z(N/\gamma_{i+1}(N))$. \end{proof}

\begin{rmk}

It is well-known (see \cite{Mo}) that a finite-dimensional semisimple Hopf algebra over $k$ is a group Hopf algebra if, and only if, it is cocommutative. By considering the comultiplication on a bismash product $H= k^L \# k F$ (see page \pageref{commult}) we therefore see that $H$ is a group Hopf algebra if, and only if, $L$ is an abelian normal subgroup of $G$. Being a group Hopf algebra certainly implies that the underlying algebra structure is that of a group algebra, hence this is another sufficient condition for a bismash product to be isomorphic, as an algebra, to a group algebra.

\end{rmk}

\section*{Acknowledgements}

The author would especially like to thank Michael Collins for introducing him to bismash products, and also Susan Montgomery and Jan Saxl for a number of helpful discussions.

\end{document}